\numberwithin{equation}{section}
\theoremstyle{plain}
\newtheorem{theorem}{Theorem}
\numberwithin{theorem}{section}
\newtheorem{lemma}[theorem]{Lemma}
\newtheorem{conjecture}[theorem]{Conjecture}
\newtheorem{prop}[theorem]{Proposition}
\newtheorem*{namedtheorem}{\theoremname}
\newcommand{\theoremname}{testing}
\newenvironment{named}[1]{\renewcommand{\theoremname}{#1}\begin{namedtheorem}}{\end{namedtheorem}}
\theoremstyle{definition}
\newtheorem{define}[theorem]{Definition}
\newtheorem{remark}[theorem]{Remark}
\newtheorem{question}[theorem]{Question}
\renewcommand*{\backref}[1]{}
\renewcommand*{\backrefalt}[4]{
  \ifcase #1 %
   [No citations.]%
  \or
   [#2]%
  \else
   [#2]%
  \fi
}
   \def\MR#1{}
\long\def\@savemarbox#1#2{\global\setbox#1\vtop{\hsize\marginparwidth 
  \@parboxrestore\tiny\raggedright #2}}
\newcommand{\HH}{{\mathbb{H}}}
\newcommand{\RR}{{\mathbb{R}}}
\newcommand{\CC}{{\mathbb{C}}}
\newcommand{\bdy}{{\partial}}
\newcommand{\PSL}{{\rm PSL}}
\def\co{\colon\thinspace}
\definecolor{purple}{rgb}{0.63, 0.36, 0.94}
\begin{document}
\title{Geometry of planar surfaces and exceptional fillings}

\author{Neil R. Hoffman}
\address{School of Mathematics and Statistics, University of Melbourne, Parkville, VIC, 3010, Australia}

\author{Jessica S. Purcell}
\address{School of Mathematical Sciences, 9 Rainforest Walk, Monash University, VIC, 3800, Australia}

\begin{abstract}
If a hyperbolic 3--manifold admits an exceptional Dehn filling, then the length of the slope of that Dehn filling is known to be at most six. However, the bound of six appears to be sharp only in the toroidal case. In this paper, we investigate slope lengths of other exceptional fillings. We  construct hyperbolic 3--manifolds that have the longest known slopes for reducible fillings. As an intermediate step, we show that the problem of finding the longest such slope is equivalent to a problem on the maximal density horoball packings of planar surfaces, which should be of independent interest. We also discuss lengths of slopes of other exceptional Dehn fillings, and prove that six is not realized by a slope corresponding to a small Seifert fibered space filling.
\end{abstract}

\maketitle

\section{Introduction}

By the $2\pi$--Theorem of Thurston and Gromov, any Dehn filling of a cusped hyperbolic 3--manifold along slopes of length more than $2\pi$ results in a manifold with a negatively curved metric (see \cite{bleilerhodgson}). By the Geometrization Theorem, such a negatively curved 3--manifold must admit a complete hyperbolic structure. Thus if a Dehn filling does not yield a hyperbolic manifold, i.e.\ if it is an \emph{exceptional filling}, then at least one of the slopes of the filling must have length at most $2\pi$.
Independently, Agol \cite{agol:bounds} and Lackenby \cite{lackenby:word} proved the $6$--Theorem, which lowered the upper bound on the length of an exceptional filling to six.

The Geometrization Theorem implies that any non-hyperbolic 3--manifold is either reducible, Seifert fibered, or toroidal. Agol showed that the 6--Theorem gives a sharp bound on slope length of exceptional fillings by exhibiting a hyperbolic manifold with a toroidal filling of length exactly six. In addition, Adams \emph{et al} \cite{adams:TotallyGeodesic} found an infinite family of hyperbolic knots $K_i$ such that each $S^3-K_i$ admits a toroidal filling of length six. In addition to these examples of toroidal fillings, obtaining bounds on the meridians of knot complements has also been a subject of inquiry. It is conjectured that four is the maximal length of a meridian of a knot complement. This conjectural bound is asymptotically sharp as observed in a number of places (see for example \cite[$\S$7]{agol:bounds} and \cite[Theorem 4.3]{purcell:slope}).  As far as we are aware, no one has previously considered the natural question of determining upper bounds on slope lengths for other types of exceptional fillings.

In this paper, we investigate that question. Our primary focus for the first part of the paper is \emph{reducible} Dehn fillings, i.e.\ those that produce a 3--manifold with an embedded essential 2--sphere. For such fillings, the bound of six does not appear to be sharp. We wish to determine the longest possible slope giving a reducible filling of a hyperbolic 3--manifold. Towards this goal, we present the following theorem which proves the existence of reducible fillings that asymptotically approach length $10/\sqrt{3}> 5.77$. 
 
\begin{theorem}\label{thm:main}
 For every $\epsilon >0$, there exists a hyperbolic 3--manifold $M$ and slopes $s_1, \dots, s_n$ such that the Dehn filled manifold $M(s_1, \dots, s_n)$ is reducible and $s_i$ has length at least $\frac{10}{\sqrt{3}}-\epsilon$. 
\end{theorem}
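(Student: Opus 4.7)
The plan is to translate the reducible Dehn filling question into a purely geometric statement about essential planar surfaces and horoball packings, and then construct an explicit family of hyperbolic 3-manifolds realizing the claimed bound. A reducible filling $M(s_1,\dots,s_n)$ contains an essential 2-sphere $S$; after a standard innermost-disk reduction, $S$ meets each filling solid torus in a disjoint union of meridian disks, so $P := S \cap M$ is an essential properly embedded planar surface in $M$ whose boundary components have slope $s_i$ on the $i$-th cusp. Producing reducible fillings with long slopes is therefore equivalent to producing essential planar surfaces whose boundary slopes have long lengths on the corresponding cusp tori.

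Next I would exploit the horoball-packing reformulation promised in the introduction. Choosing a maximal embedded horoball neighborhood at each cusp and lifting to upper half-space $\HH^3$ with one cusp placed at $\infty$, the boundary of $P$ near $\infty$ consists of Euclidean segments of slope $s_i$ on a horosphere, while the remaining cusps lift to embedded horoballs whose sizes and positions are governed by the cusp shape. Essentiality of $P$ together with embeddedness of the maximal cusps forces a horoball packing whose density is bounded above, and the combinatorics of how $P$ meets these horoballs translates directly into a slope-length bound. The extremal configuration is a hexagonal arrangement of mutually tangent horoballs, and a Euclidean computation in that configuration yields the constant $10/\sqrt{3}$.

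For the construction itself, I would exhibit a cusped hyperbolic 3-manifold $N$ containing an essential planar surface $P_0$ whose universal cover closely approximates the extremal hexagonal horoball configuration, for instance via a gluing of regular ideal hyperbolic polyhedra, or a highly symmetric link complement whose cusp geometry has the requisite $60^\circ$ structure. Applying Thurston's hyperbolic Dehn surgery theorem to auxiliary cusps of $N$ then produces a sequence of hyperbolic manifolds $M$ in which the cusp tori carrying $\partial P_0$ are deformed in a controlled way, driving the slope lengths $\ell(s_i)$ toward $10/\sqrt{3}$ while keeping $P_0$ essential. Filling along these slopes yields a reducible manifold $M(s_1,\dots,s_n)$ in which every $s_i$ has length at least $10/\sqrt{3} - \epsilon$, as required.

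The main obstacle will be the construction step: one must exhibit a concrete $N$ whose planar surface actually occupies the extremal configuration in its universal cover, verify hyperbolicity of $N$ and essentiality of $P_0$, and ensure that the auxiliary fillings both remain hyperbolic and preserve essentiality of $P_0$ while the slope lengths converge to the optimum. Identifying the value $10/\sqrt{3}$ from the Euclidean geometry of the extremal packing and certifying these delicate limiting arguments is where I expect the bulk of the technical work to lie.
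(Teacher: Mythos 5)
Your high-level strategy---reduce the problem to finding hyperbolic planar surfaces with embedded cusp neighborhoods all of large area, then realize such a surface inside a hyperbolic 3--manifold so that capping off its boundary gives a reducing sphere---is exactly the paper's strategy. However, both of the two substantive steps are missing, and the one concrete claim you make about the constant is wrong. You assert that ``the extremal configuration is a hexagonal arrangement of mutually tangent horoballs, and a Euclidean computation in that configuration yields the constant $10/\sqrt{3}$.'' It does not: the hexagonal (densest) packing, in which every triangle of the canonical decomposition is a $[1,1,1]$ packing and every vertex has degree six, gives cusp area exactly $6$, which is precisely why the toroidal bound of six is sharp. The whole point of the planar case is that Euler characteristic forbids an all-degree-six triangulation of the sphere, so some cusps are forced to be smaller, and the value $10/\sqrt{3}$ emerges from a genuinely different computation. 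The paper subdivides the icosahedral triangulation of the $12$--punctured sphere into triangulations $T_m$ with twelve degree-five vertices and the rest degree six, assigns cusp areas via a recursion $c_{j+1}=4/(L-2c_j)$ whose attracting fixed point, combined with the degree-five constraint $L=5c_m$, forces $L=10/\sqrt{3}$; the bound is achieved only asymptotically as $m\to\infty$. Nothing in your proposal produces this number, and note also that $10/\sqrt{3}$ is only \emph{conjectured} to be an upper bound---the theorem you are proving is the existence (lower-bound) direction, so you must actually exhibit the packing.

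The second gap is the realization step. You propose to find a single manifold $N$ ``via a gluing of regular ideal hyperbolic polyhedra, or a highly symmetric link complement'' and then do Dehn surgery on auxiliary cusps. This is a placeholder, not a construction, and it cannot work as stated: the examples in the theorem require a sequence of planar surfaces with an unbounded number of cusps (the surfaces $F_m$ above have $40m^2+2$ punctures), so no fixed $N$ with finitely many cusps suffices, and there is no reason a symmetric polyhedral gluing contains a totally geodesic planar surface carrying the specific non-homogeneous horoball packing needed. The paper instead proves a general realization theorem (Theorem~\ref{Thm:ReducibleFilling}): any hyperbolic structure on a planar surface with chosen cusp neighborhood can be approximated arbitrarily well by one that embeds totally geodesically in a finite-volume hyperbolic 3--manifold with controlled cusp geometry. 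This rests on Brooks' quasiconformal deformation theory for circle packings, together with careful bookkeeping (in decorated moduli space) to keep the chosen horoballs disjoint from the packing circles so that the cusp neighborhood remains embedded upstairs. Finally, your proposal takes reducibility for granted; the paper verifies it by noting that each cusp of $M$ meets at most one cusp of the surface, so the core of each filling torus meets the capped-off sphere exactly once and the sphere cannot bound a ball on either side. Without the explicit packing and without the circle-packing realization theorem, the proposal does not constitute a proof.
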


Note that if a hyperbolic 3--manifold $M$ admits a reducible filling, then it contains an essential punctured 2--sphere. We may pleat this 2--sphere in $M$ to obtain a hyperbolic structure on a planar surface. One question that can be asked is, which hyperbolic planar surfaces arise in this manner? 
The cusp neighborhoods of $M$ induce cusp neighborhoods of $S$, and the lengths of curves tracing out the boundary of these cusp neighborhoods of $S$ must be at least as long as the exceptional slopes of $M$ (see, for example \cite[Lemma~2.5]{futer-schleimer}). Thus a lower bound on slope length of $M$ gives a lower bound on slope length of $S$.

On the other hand, in this paper we show:

\begin{named}{Theorem~\ref{Thm:ReducibleFilling}}
Let $R$ be a hyperbolic structure on a planar surface with a fixed cusp neighborhood $H$. Then for any $\epsilon>0$, in any neighborhood of $R$ in its moduli space, there exists a hyperbolic surface $R'$, and a hyperbolic 3--manifold $M$ such that:
\begin{enumerate}
\item $R'$ is isometric to a totally geodesic surface embedded in $M$.
\item There exists a cusp neighborhood of $M$ such that each boundary slope of $R'$ in $M$ has length within $\epsilon$ of the corresponding length on $\partial H$ in $R$.
\item Dehn filling $M$ along the boundary slopes of $R'$ results in a reducible manifold.
\end{enumerate}
\end{named}

Note that this theorem and the discussion above imply that finding long reducible slopes can be reduced to finding hyperbolic structures on planar surfaces with long cusp lengths. For if $R$ is a hyperbolic planar surface with cusps of length $L_1, \dots, L_n$, the theorem implies that for any $\epsilon>0$, there is a 3--manifold $M$ admitting a reducible filling with slope lengths at least $L_1-\epsilon, \dots, L_n-\epsilon$, respectively. On the other hand, the discussion above implies that the slope lengths for this reducible filling are at most the lengths of the cusps of $R'$, which are at most $L_1+\epsilon, \dots, L_n+\epsilon$, respectively. Thus for any hyperbolic structure on a planar surface, there is a sequence of 3--manifolds with reducible fillings whose slope lengths approach those of $R$.
Hence finding long reducible slopes becomes a question of finding hyperbolic structures on planar surfaces, and in particular, finding a horoball packing of a planar surface that maximizes the minimal area horoball.

Along these lines, we submit the following conjecture, which if true would show that our examples in Theorem~\ref{thm:main} produce the (asymptotically) longest possible reducible fillings.
 
\begin{conjecture}
Let $F$ be a planar surface admitting a hyperbolic structure. For any horoball packing of $F$, there is at least one horoball of area less than $10/\sqrt{3}$. 
\end{conjecture}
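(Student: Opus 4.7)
The plan is to combine B\"or\"oczky's density bound for horoball packings in $\mathbb{H}^2$ with a combinatorial refinement that exploits the genus-zero hypothesis on $F$.

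First, by Gauss--Bonnet, a hyperbolic planar surface with $n$ cusps has area $2\pi(n-2)$. Since an embedded horoball has area equal to the length of its bounding horocycle, B\"or\"oczky's theorem (density at most $3/\pi$) gives
\[
\sum_{i=1}^n L_i \;\leq\; 6(n-2),
\]
and hence by averaging $\min_i L_i \leq 6 - 12/n$. A short calculation shows $6 - 12/n < 10/\sqrt{3}$ exactly when $n < 27 + 15\sqrt{3} \approx 52.98$, so the conjecture is automatic whenever $n \leq 52$.

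For $n \geq 53$, planarity must enter essentially. The plan is to take the ideal triangulation $\mathcal{T}$ of $F$ dual to the Epstein--Penner decomposition of the packing: it is a planar triangulation with $n$ vertices, $2n-4$ faces, $3(n-2)$ edges, and total vertex degree $6(n-2)$. Since the average degree $6-12/n$ is strictly less than $6$, at least one cusp $C$ must have degree $k \leq 5$ in $\mathcal{T}$. In each of the $k$ ideal triangles incident to $C$, classical horoball geometry gives the identity $\ell_i \ell_j \leq 1$ for the arc lengths of the two horoballs at a shared corner, with equality iff those horoballs are tangent across the corresponding edge. Summing the $k$ arc-length contributions at $C$ to recover $L_C$, and invoking these product inequalities, should force $L_C$ strictly below the B\"or\"oczky mean $6 - 12/n$ whenever all neighbors of $C$ are themselves large.

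The key obstacle is calibrating this local estimate so that the sharp constant is exactly $10/\sqrt{3}$. A natural reduction is to the cusp cover at $C$: after lifting so that $C$ sits at $\infty$, the boundary of $H_C$ becomes a flat Euclidean cylinder of circumference $L_C$, and the other horoballs of the packing appear as Euclidean disks tangent from below to the top of that cylinder. The assumption that every horoball has area at least $10/\sqrt{3}$ translates into a uniform lower bound on the Euclidean diameters of these disks, and one must show by an extremal Euclidean packing argument in the cylinder that no such arrangement is compatible with $L_C \geq 10/\sqrt{3}$. The expected extremal configuration is a nearly hexagonal local arrangement of horoballs around $C$, matching the family used in the construction of Theorem~\ref{thm:main}. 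Showing rigorously that no asymmetric configuration beats this hexagonal optimum---equivalently, that the examples of Theorem~\ref{thm:main} are genuinely extremal---is the hardest step, and is presumably why the statement is offered only as a conjecture.
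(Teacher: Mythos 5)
This statement is a \emph{conjecture}: the paper offers no proof of it, so there is nothing to compare your argument against, and I am judging the proposal on its own terms. The first half of your proposal is correct and is a genuine partial result. Gauss--Bonnet gives $\area(F)=2\pi(n-2)$, B\"or\"oczky gives total horoball area at most $\tfrac{3}{\pi}\cdot 2\pi(n-2)=6(n-2)$, horoball area equals horocycle length, and averaging gives $\min_i L_i\leq 6-12/n$, which is strictly less than $10/\sqrt{3}$ precisely for $n<27+15\sqrt{3}\approx 52.98$. So the conjecture does hold for all planar surfaces with at most $52$ cusps, and this part of your write-up could be made fully rigorous as stated.

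For $n\geq 53$, however, what you have is a program with the essential step missing, and the heuristic you lean on is not strong enough as formulated. Euler's formula does force a cusp $C$ of degree $k\leq 5$ in any ideal triangulation, and the product inequalities $ab\leq 1$ across edges (Lemma~\ref{lemma:DistBetweenCusps}) are correct. But the claim that these ``force $L_C$ strictly below the B\"or\"oczky mean whenever all neighbors of $C$ are large'' cannot close the argument: the paper's own construction (the subdivided icosahedron of Lemmas~\ref{lemma:TriangulationConstruction} and~\ref{lemma:cjChoice}) exhibits degree-$5$ cusps of area $10/\sqrt{3}-\epsilon$ all of whose neighbors have area at least $10/\sqrt{3}$. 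So any local analysis at $C$ alone can at best recover the threshold $10/\sqrt{3}$, and proving that this threshold is never \emph{attained} is a global matter: the constraint at the degree-$5$ vertex must be propagated outward through chains of triangles (this is exactly the dynamical system $c_{j+1}=4/(L-2c_j)$ with attracting fixed point $2/\sqrt{3}$ appearing in Lemma~\ref{lemma:cjChoice}), and one must rule out every other combinatorial distribution of the twelve degree deficits and every non-symmetric decoration. Your proposed reduction to a Euclidean disk packing in the cusp cylinder over $C$ only sees the first ring of neighbors and therefore cannot detect this propagation. (Also, a small correction: the Epstein--Penner decomposition is itself the canonical ideal cell decomposition, not the dual of one, and the extremal local picture at a degree-$5$ cusp is pentagonal rather than hexagonal.) In short: the case $n\leq 52$ is proved, the case $n\geq 53$ is not, and the statement remains open.
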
  

We conclude the paper with discussion of the Seifert fibered and finite exceptional filling cases. Similar to the reducible case, for such fillings the longest possible slope yielding this type of manifold is still unknown. We discuss the longest known examples of these, and note that experimental and theoretical evidence points to bounds strictly less than six in these cases. 
For the Seifert case, we prove the following. 

\begin{named}{Theorem~\ref{thm:SSFbound}}
  Let $M$ be a hyperbolic manifold with one cusp, and let $s$ be a slope such that $M(s)$ is a small Seifert fibered space with infinite fundamental group.  Then the length of $s$ is strictly less than six.
\end{named}

\subsection{Acknowledgements}
The first author is partially supported by ARC Discovery Grant DP130103694. The second author is partially supported by NSF Grant DMS-1252687 and DMS-1128155, and thanks the University of Melbourne for hosting her while working on this project.
We thank Craig Hodgson and Ian Agol for helpful discussions. We also thank the referee for very helpful comments.

\section{Existence of reducible fillings}

At most countably many hyperbolic structures on a planar surface can be embedded as a totally geodesic surface in a finite volume hyperbolic 3--manifold, because there are only countably many finite volume hyperbolic 3--manifolds. However, the following theorem shows that the set of hyperbolic structures on planar surfaces that can be embedded is dense in the moduli space of the surface.

\begin{theorem}\label{Thm:Cusped}
Let $R$ be a hyperbolic surface with finite genus and with at least one but at most finitely many cusps.  Then in any neighborhood of $R$ in its moduli space, there exists a hyperbolic surface $R'$ with the following properties.
\begin{enumerate}
\item There exists a finite volume cusped hyperbolic 3--manifold $M$ that contains an embedded surface isometric to $R'$.
\item For any $\epsilon>0$ and any embedded cusp neighborhood $H$ of $R$, we may take $M$ such that there exists an embedded cusp neighborhood of $M$ for which each boundary slope of $R'$ in $M$ has length within $\epsilon$ of the corresponding length on $\partial H$ in $R$. 
\item Each cusp of $R'$ is embedded in a distinct cusp of $M$. 
\end{enumerate}
\end{theorem}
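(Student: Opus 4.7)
The plan is to realize $R$ first as the totally geodesic convex core of an infinite-volume Fuchsian $3$-manifold, and then to use a density theorem to find a small deformation whose image sits inside a finite-covolume lattice in $\PSL(2,\CC)$. Write $R = \HH^2/\Gamma$ with $\Gamma$ a geometrically finite Fuchsian group whose conjugacy classes of parabolic elements correspond to the cusps of $R$. Under the standard inclusion $\PSL(2,\RR) \hookrightarrow \PSL(2,\CC)$ the group $\Gamma$ becomes Kleinian and preserves a totally geodesic plane, so the quotient $N_0 = \HH^3/\Gamma$ is an infinite volume hyperbolic $3$-manifold containing $R$ as a totally geodesic surface. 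The rank-one cusps of $R$ extend to rank-one cusps of $N_0$, and an embedded horocyclic neighborhood $H$ of $R$ extends to an embedded horoball neighborhood in $N_0$.

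Next I would appeal to a Brooks-style density theorem to produce an arbitrarily small deformation $\rho \from \Gamma \to \PSL(2, \CC)$ of the inclusion whose image $\Gamma' = \rho(\Gamma)$ is contained in a finite-covolume lattice $\Lambda \subset \PSL(2,\CC)$, with $\rho$ preserving parabolicity of peripheral elements. The deformed group $\Gamma'$ will in general only be quasi-Fuchsian, so $R$ no longer embeds totally geodesically in $\HH^3/\Gamma'$; however, the pleated convex core of $\HH^3/\Gamma'$ gives a hyperbolic surface $R'$ isometrically embedded there, and by standard bounds on quasi-conformal deformation $R'$ is close to $R$ in the moduli space of the underlying surface.

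Composing the embedding $R' \hookrightarrow \HH^3/\Gamma'$ with the covering map yields an immersion of $R'$ into the finite-volume orbifold $\mathcal{O} = \HH^3/\Lambda$. Passing to a sufficiently deep finite cover $M \to \mathcal{O}$, using subgroup separability of surface subgroups in $\Lambda$ (which follows from Agol's virtual specialization theorem), I arrange that $M$ is a manifold, the immersion of $R'$ becomes an embedding, and the finitely many cusps of $R'$ lie in distinct cusps of $M$. Continuity of cusp shape under the deformation $\rho$ then controls the corresponding cusp neighborhoods of $M$: choosing $\rho$ sufficiently close to the inclusion and a cusp neighborhood of $M$ that approximates the image of $H$ gives slope lengths within $\epsilon$ of the corresponding lengths on $\bdy H$ in $R$, for each boundary slope of $R'$.

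The main obstacle is the density step. Brooks' original theorem concerns cocompact deformations; the analogue needed here, for geometrically finite Fuchsian groups with cusps and with control of cusp shape under the perturbation, is the technical heart of the argument. Once density is in hand, the separability step to achieve the embedding and cusp-distinctness is standard, and the slope-length approximation reduces to continuity of cusp geometry in the algebraic topology on the representation variety.
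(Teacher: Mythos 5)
Your overall strategy---put $\Gamma$ inside $\PSL(2,\CC)$ as a Fuchsian group, deform it slightly so that it lies in a finite-covolume lattice, then extract an embedded copy of the deformed surface in a finite-volume manifold---is the same strategy the paper uses. But the step you defer as ``the technical heart'' is in fact the entire content of the paper's proof, so as written there is a genuine gap. The paper does not cite a ready-made density theorem; it proves the needed statement by hand via Brooks' circle-packing machinery: choose a fundamental ideal polygon for $R$, place tangent (green) circles at the ideal vertices, along the edges, and in the interior of the domain of discontinuity so that all interstices are triangles or quadrilaterals, apply Brooks' small quasi-conformal deformation to make every quadrilateral interstice rational (hence packable), and then adjoin to the deformed group the reflections in the green circles, in the dual (red) circles of the triangular interstices, and in the equatorial plane. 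The resulting reflection group is discrete with finite-volume fundamental domain, and it manufactures the lattice directly rather than finding one by a separate density argument. The cusped case requires extra care that your sketch does not address: Lemmas~\ref{lemma:CirclePackDisjointH} and~\ref{lemma:RedCircles} arrange that no packing hemisphere meets a lift of the cusp neighborhood $H'$ in a compact region, which is exactly what makes the cusp neighborhood of $M$ embedded and gives the two-sided control of slope lengths in conclusion~(2).

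A second, more specific problem: you concede that the deformed group is ``in general only quasi-Fuchsian'' and propose to use the pleated convex-core boundary as $R'$. In the paper's construction this never happens, because the deformation is applied to the group extended by the reflection in the equatorial plane; the deformed group still contains such a reflection, so $\Gamma_\delta$ remains Fuchsian and $R'$ is genuinely totally geodesic. This matters quantitatively: for a merely pleated surface the boundary slope length in $M$ is bounded \emph{above} by the corresponding length on the surface, but conclusion~(2) (and the application in Theorem~\ref{Thm:ReducibleFilling}, which needs $R'$ totally geodesic) requires the slope length in $M$ to be close to the surface length from below as well, which a pleating does not provide. Finally, your appeal to subgroup separability to promote the immersion to an embedding is unnecessary in the paper's approach (the equatorial plane is the fixed set of a reflection in the group, so its image is embedded by construction); it could be made to work, but it is additional heavy machinery layered on top of a density statement that still has to be proved.
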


The proof of Theorem~\ref{Thm:Cusped} constructs $M$ by appealing to circle packing techniques of Brooks, \cite{Brooks:DeformationSchottky} and \cite{Brooks:CirclePackings}. This argument is similar to ideas of Fujii \cite{Fujii:Deformations} that state that closed surfaces satisfy a similar property. 

\begin{define}\label{def:circlePacking}
A \emph{circle packing} of a polygonal region is a collection of circles with disjoint interiors embedded in the closure of the region, such that all circles are tangent to other circles, and the exterior of the union of circles is a disjoint union of curvilinear triangles. Given a collection of circles, an \emph{interstice} is a curvilinear polygon in the complement of the interiors of the circles, with boundary made up of pieces of the circles. Thus a circle packing is a collection of circles for which all interstices are triangles.
\end{define}


Given a hyperbolic surface $R$, begin by taking a fundamental domain for $R$ in $\HH^2$ that is a finite ideal polygon $P$, with side pairings given by isometries of $\HH^2$. For example, $P$ can be obtained by fixing a cusp neighborhood for cusps of $R$, and taking the corresponding canonical decomposition of $R$ as in Epstein--Penner \cite{EpsteinPenner}.

The universal cover of $R$ is $\HH^2$, which we view as the equatorial plane of $\HH^3$ under the ball model.
For each edge of $P$, take a geodesic plane in $\HH^3$ whose intersection with $\HH^2$ is that edge, and such that the plane is orthogonal to the copy of $\HH^2$.  Notice that a side--pairing isometry of this edge will take the geodesic plane in $\HH^3$ to a geodesic plane meeting $\HH^2$ orthogonally, with intersection another edge of $P$. Thus the side--pairing isometries, which generate the fundamental group of $R$ by the Poincar{\'e} polyhedron theorem, act on this collection of geodesic planes in $\HH^3$. The boundaries of these geodesic planes give a collection of circles on $\bdy_\infty \HH^3 = S^2_\infty$ that are tangent in pairs, all orthogonal to the equator of $S^2_\infty$.  We color these circles \emph{blue}. The collection of face pairing isometries $\Gamma\subset{\rm{Isom}}^+(\HH^3)$ forms a Fuchsian group that preserves blue circles. 
The quotient of the action of $\Gamma$ on $\HH^3$ gives an infinite volume Fuchsian manifold, with rank--1 cusps.

\begin{lemma}\label{lemma:CirclePack}
In any neighborhood of $R$ in its moduli space, there exists a hyperbolic surface $R'$ for which the infinite volume Fuchsian manifold as above admits a circle packing on its boundary at infinity.
\end{lemma}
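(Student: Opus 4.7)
The plan is to apply the circle packing deformation technique of Brooks~\cite{Brooks:DeformationSchottky,Brooks:CirclePackings} to produce $R'$ in any prescribed neighborhood of $R$ in moduli space. First, I would analyze the complement of the blue circles on the discontinuity set of $\Gamma$. The blue circles are tangent in pairs at the parabolic fixed points on the equator, corresponding to the cusps of $R$, and their complement on the upper hemisphere of $S^2_\infty$ consists of two types of $\Gamma$-orbits: tile regions corresponding to lifts of the fundamental polygon $P$, and cusped interstitial regions accumulating at the parabolic fixed points. In each complementary region I would inscribe new circles tangent to the bounding blue circles, extended $\Gamma$-equivariantly; by the reflection symmetry through $\HH^2$, the same inscription occurs on the lower hemisphere. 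The resulting $\Gamma$-invariant configuration typically fails to be a circle packing in the sense of Definition~\ref{def:circlePacking}, because inscribed circles from adjacent regions need not be mutually tangent, so not every complementary region is a curvilinear triangle.

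The failure is measured by finitely many (modulo the $\Gamma$-action) \emph{interstice moduli}, each a real-valued function on the deformation space of $\Gamma$. The core of Brooks' method is that these moduli depend non-degenerately on the deformation parameters of the Fuchsian group, so that by a continuity/transversality argument one can simultaneously zero them out by deforming $\Gamma$ within an arbitrarily small neighborhood in its deformation space. Since this deformation space is locally identified with the Teichm\"uller space of $R$, the perturbation corresponds to a hyperbolic structure $R'$ on the same underlying topological surface, arbitrarily close to $R$ in moduli space. When all interstice moduli vanish, the blue circles together with the inscribed circles form a genuine circle packing on $\bdy_\infty(\HH^3/\Gamma')$, which is the required conclusion.

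The main obstacle will be verifying that Brooks' argument, originally set up for closed or bounded-type Fuchsian surfaces, extends to our cusped planar case. The cusps of $R$ correspond to parabolic fixed points where infinitely many blue circles accumulate tangentially, and the cusped interstitial regions at these points require a careful local analysis: one must define the correct modulus at such an accumulation, confirm that it varies continuously with $\Gamma$, and check that deforming within the Teichm\"uller space of the punctured surface, which keeps the parabolic elements parabolic, still provides enough freedom to close up every interstice modulo $\Gamma$. Fujii's extension of Brooks' technique to closed surfaces~\cite{Fujii:Deformations} provides a template, and the crucial check is that the dimension of the Teichm\"uller space of $R$ grows sufficiently with the number of cusps to accommodate the system of modulus equations.
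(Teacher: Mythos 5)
Your overall strategy --- add circles to the blue configuration equivariantly, reduce the failure to be a packing to finitely many interstice moduli, and invoke Brooks --- is the same as the paper's, but the key deformation step as you state it would fail. You propose to \emph{zero out} the interstice moduli by an arbitrarily small deformation of $\Gamma$ inside the Teichm\"uller space of $R$. Neither half of this is what Brooks provides. First, "all interstices are triangles" is a closed, positive-codimension condition: if some quadrilateral modulus at $R$ is bounded away from the degenerate (triangle) value, no small perturbation makes it vanish, so the vanishing locus cannot be expected to meet every neighborhood of $R$. Brooks' density statement is of a different kind: each quadrilateral interstice carries a real parameter whose \emph{rational} values are exactly those for which that quadrilateral can itself be packed by finitely many additional circles, and rationality of finitely many parameters is achieved by an arbitrarily small quasi-conformal deformation. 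One does not deform until the configuration is already a packing; one deforms until every quadrilateral admits a finite packing and then adds those finitely many circles. Second, the deformation does not take place in the Teichm\"uller space of $R$: one first adjoins to $\Gamma$ the reflections in all the added circles and in the equatorial plane, forming a larger discrete group $\Gamma'$, and Brooks' deformation acts on $\Gamma'$, whose deformation space is parameterized by the interstice moduli. This dissolves your dimension-count worry, and the presence of the equatorial reflection in $\Gamma'$ is what guarantees the deformed copy of $\Gamma$ is still Fuchsian, so that $R'=\HH^2/\Gamma_\delta$ is a genuine nearby point of moduli space --- something you assert but do not justify.

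The cusp issue you flag at the end is real, but it is resolved by an elementary construction you would need to supply rather than by redefining moduli at accumulation points: at each ideal vertex of the fundamental polygon place a small circle tangent to the equator at that parabolic fixed point, chosen small enough that its orbit under the face pairings is embedded, and along each ideal edge place a finite chain of pairwise tangent circles joining the two vertex circles, matched up by the face-pairing isometries. This caps off the cusped interstitial regions so that, after finitely many further circles are added in the interior, all interstices are compact triangles and quadrilaterals, with only finitely many quadrilaterals modulo the group, and Brooks' theorem applies as in the closed case.
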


\begin{proof}
We build the circle packing in four steps, beginning with the Fuchsian manifold associated to $R$ given by face pairings of blue hemispheres.
  
\smallskip

\textbf{Step 1: Select circles tangent to ideal vertices.}
Select an ideal vertex $v$ of $P$ and choose a small circle on $S^2_\infty$ tangent to the equatorial plane at $v$. Also take its image under reflection in the equatorial plane. The face pairings of $P$ take these circles to a finite collection of circles, all tangent to the equatorial plane at ideal vertices of $P$. By choosing the initial radius small enough, these circles will be mapped to disjoint circles under $\Gamma$. Add such circles for each ideal vertex of $P$, and color them \emph{green}. Note the green circles are preserved under the action of $\Gamma$. 

\smallskip

\textbf{Step 2: Select circles on edges.}
Next, select an ideal edge of $P$. This corresponds to half of a blue circle in the northern hemisphere of $S^2_\infty$. Choose a finite collection of pairwise tangent circles orthogonal to this blue circle with the initial and final circles tangent to the green circles at the endpoints of the corresponding ideal edge. One of the face pairings of $\Gamma$ takes this collection of circles to a collection of pairwise tangent circles on another blue half-circle, identified to the original. See for example Figure~\ref{fig:blueRedPacking}(a). Again we may ensure the circles have disjoint interiors, and again reflect through the equatorial plane. Repeat this process for each ideal edge, and color the resulting circles \emph{green}. When finished, we have a collection of pairwise tangent green circles with disjoint interiors, orthogonal to blue circles. The collection of blue circles, green circles, and the equatorial plane is preserved under the action of $\Gamma$.

\begin{figure}
  \centering
  \begin{tabular}{ccccc}
\includegraphics[height=1.2in]{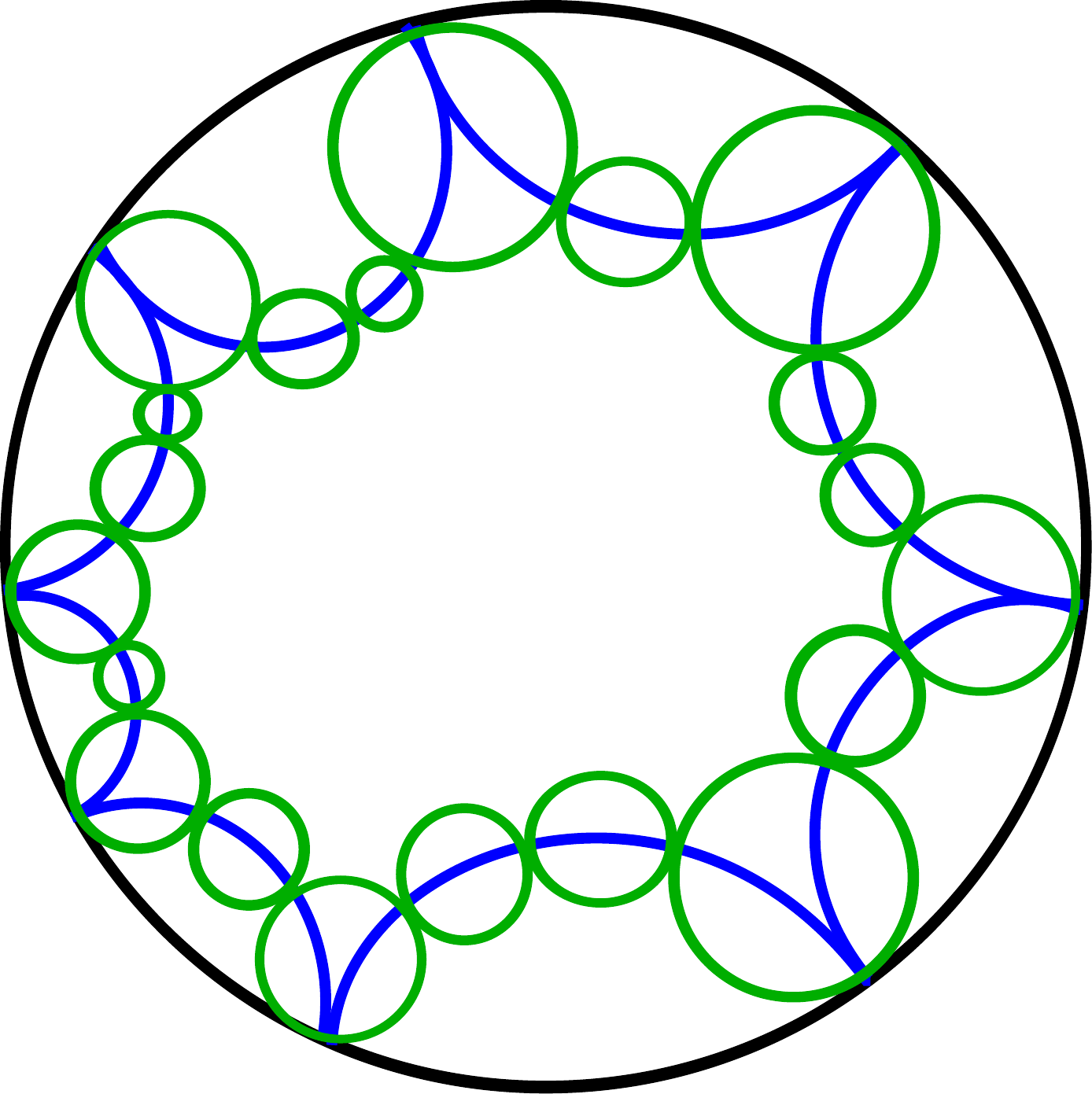} & &
\includegraphics[height=1.2in]{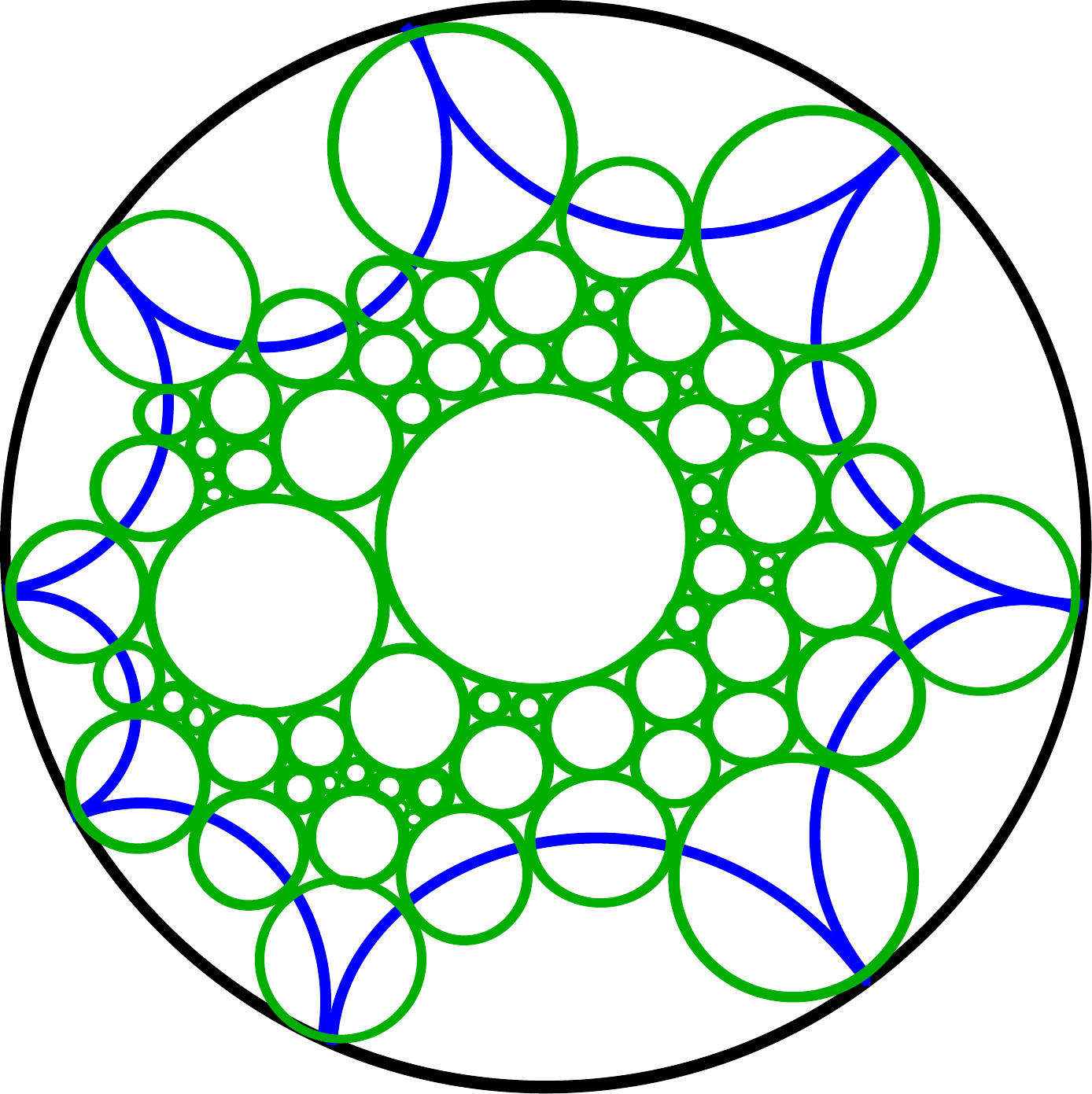} & &
\includegraphics[height=1.2in]{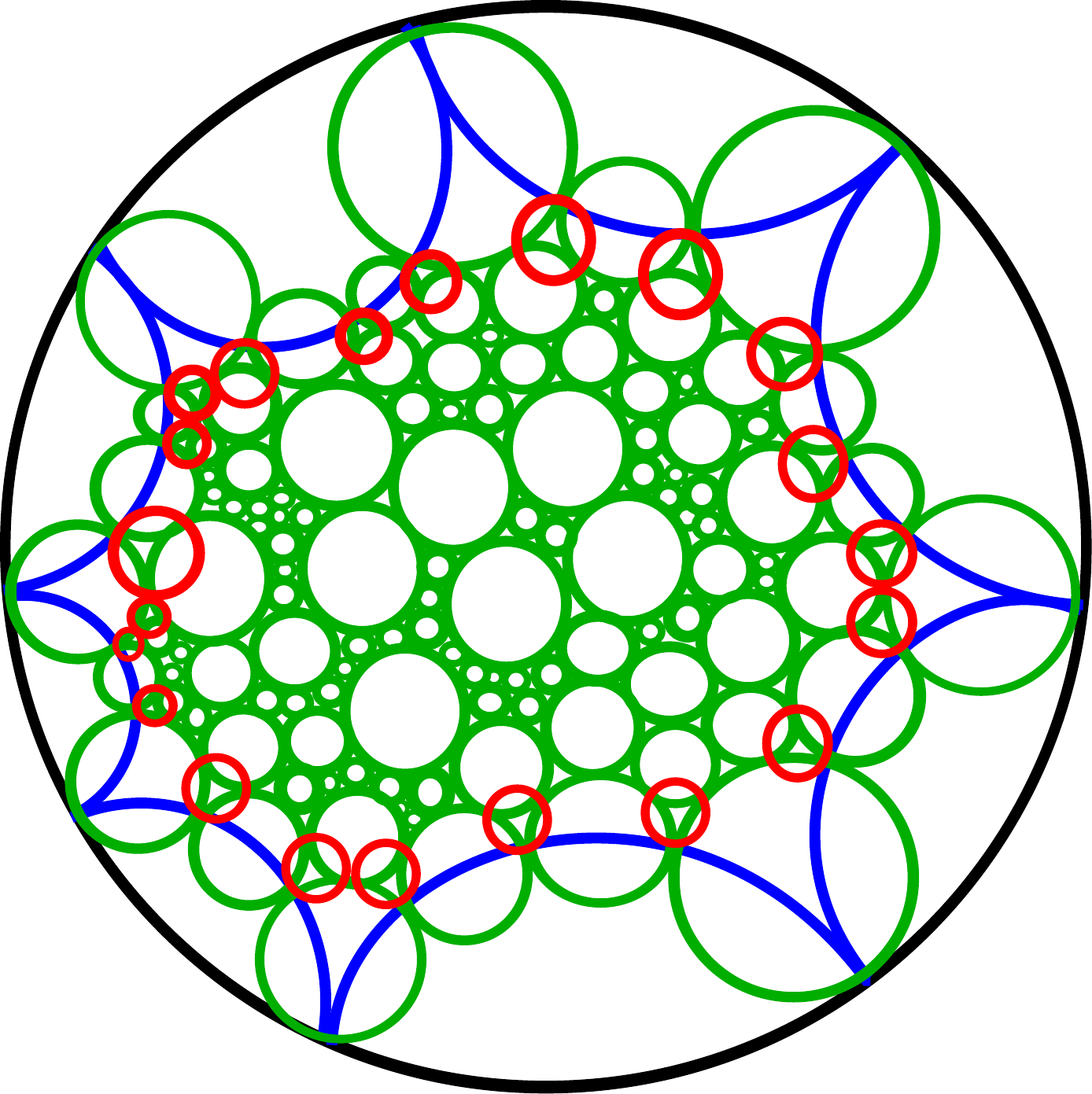}\\
(a) & \hspace{.2in} & (b) &\hspace{.2in} & (c)\\
  \end{tabular}
  \caption{(a) Packing green circles along edges. (b) A packing of circles in a finite ideal polygon $P$, with semi-circles coming from face pairings (blue) and other circles (green) in the interior. (c) The packing from before together with some of the dual circles (red).}
  \label{fig:blueRedPacking}
\end{figure}

\smallskip

\textbf{Step 3: Select circles in interior.}
Now in the region of the northern hemisphere of $S^2_\infty$ disjoint from all interiors of blue circles, add finitely many circles with disjoint interiors, tangent to existing green circles. 
The result will not necessarily be a circle packing, with all triangular interstices. However, we may arrange such that all interstices are triangles and quadrilaterals, with only a finite number of quadrilateral interstices. Continue to color these circles \emph{green}, and reflect them across the equator to obtain a collection of circles in $S^2_\infty$, colored
green and blue, that meet in triangular interstices and a finite number of quadrilateral interstices. As in \cite{Brooks:DeformationSchottky}, the quadrilaterals are parameterized by $\RR$, with rational numbers corresponding to quadrilaterals which admit an actual circle packing. 

\smallskip

\textbf{Step 4: Deform to a circle packing.}
Again following Brooks, for each green circle, adjoin to $\Gamma$ the reflection in that green circle. Also add reflection in the equator. This forms a new group $\Gamma'$, which is still discrete by the Poincar{\'e} polyhedron theorem. Then as in \cite{Brooks:CirclePackings}, there exists a quasi-conformal deformation of $\Gamma'$ by an arbitrarily small amount $\delta$ to a new group $\Gamma_\delta'$, taking reflections in circles to reflections in circles, for which all quadrilaterals have rational parameters. That is, the result admits a finite circle packing, by adding finitely many circles to the quadrilateral interstices. 
Let $\Gamma_\delta$ denote the image of $\Gamma$ under this deformation. It is still a Fuchsian group, because $\Gamma_\delta'$ preserves the plane corresponding to the equator, by a reflection. Then $R' = \HH^2/\Gamma_\delta$ is a quasi-conformal deformation of $R$ that is $\delta$-close to $R$, and $R'$ satisfies the conclusions of the lemma. 
\end{proof}

Figure~\ref{fig:blueRedPacking}(b) shows a finite ideal polygon with a circle packing by green circles.

Note that for the infinite volume Fuchsian manifold of Lemma~\ref{lemma:CirclePack}, distinct cusps of $R'$ are embedded in distinct cusps of the Fuchsian manifold.

To obtain the full result of Theorem~\ref{Thm:Cusped}, we need to add more structure to our argument, namely, we want to include the cusp neighborhood of $R$. To do so, we use a bit of extra bookkeeping, namely the \emph{decorated} moduli space. The decorated moduli space can be described locally by what are commonly known as \emph{Penner coordinates} on the decorated Teichm\"uller space, defined (but not named as such) in Penner~\cite{penner1987decorated}. We will not need a precise description of these coordinates. Instead, we will appeal to the following two properties.
\begin{enumerate}
\item The lift of a point in the moduli space of the surface $R$ to the \emph{decorated} moduli space is homeomorphic to $\RR^c$, where $c$ is the number of cusps of $R$. (The coordinates of $\RR^c$ can be viewed as measuring signed distance between cusps.)
\item If $R'$ is a point in a neighborhood of (undecorated) moduli space of $R$ and $H$ is an embedded cusp of $R$, then there exists $(R',H')$ in the decorated moduli space of $(R,H)$ such that cusp lengths associated to $H'$ are arbitrarily close to those of $H$, and $H'$ is an embedded cusp neighborhood.
\end{enumerate}

\begin{lemma}\label{lemma:CirclePackDisjointH}
  Let $(R,H)$ be a surface and its embedded cusp neighborhood. Then in any neighborhood of $(R,H)$ in its decorated moduli space, there exists a point $(R',H')$ such that Lemma~\ref{lemma:CirclePack} holds for $R'$, and moreover $H'$ is an embedded cusp of $R'$ whose lifts are horoballs, each of which meets a hemisphere bounded by a circle in the circle packing of Lemma~\ref{lemma:CirclePack} only when that circle is tangent to the center of the horoball. In this case, the intersection of the horoball and the hemisphere is a noncompact region of the hemisphere.
\end{lemma}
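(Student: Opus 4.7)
The plan is to add a cusp decoration to the conclusion of Lemma~\ref{lemma:CirclePack}, exploiting the freedom in the decorated moduli space. First I would take the Epstein--Penner ideal polygon $P$ of $(R,H)$ as the fundamental polygon and run the construction of Lemma~\ref{lemma:CirclePack}. This produces a quasi-conformal deformation $R'$ of $R$ in any prescribed neighborhood, together with a circle packing on $\bdy_\infty\HH^3$ of the associated infinite volume Fuchsian manifold. By property (2) of the decorated moduli space listed just before the lemma, I can lift to a point $(R',H')$ near $(R,H)$ with $H'$ embedded and cusp lengths arbitrarily close to those of $H$.

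To verify the horoball--hemisphere condition, I would work in the upper half-space model. For each cusp $v$ of $R'$, send $v$ to $\infty$ so the lift of $H'$ at $v$ becomes the half-space $B_v=\{z>h_v\}$ for some $h_v>0$ determined by the decoration. The packing circles split into two types: those through $\infty$, which appear as straight lines in the $xy$-plane and bound vertical half-planes meeting $B_v$ in unbounded half-strips; and those not through $\infty$, which appear as Euclidean circles of some radius $r$ and bound hemispheres of maximum Euclidean height $r$, meeting $B_v$ if and only if $r>h_v$. The circles of the first type are exactly those passing through the point $v$ on $\bdy\HH^3$, so they constitute the ``tangent to the center'' alternative, and the noncompactness of the intersection is automatic. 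The lemma therefore reduces to arranging that every packing circle not through $v$ has Euclidean radius at most $h_v$.

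The green circles added in Steps~1--3 of Lemma~\ref{lemma:CirclePack} can be chosen with initial radii as small as desired, and the small Step~4 quasi-conformal deformation preserves this smallness; in particular these radii can be forced below $h_v$. The blue circles are constrained by the polygon $P$: a blue circle not through $v$ is a semicircle of radius $\tfrac12|p-q|$, where $p,q$ are the other two endpoints in the model. The main obstacle is to bound this radius by $h_v$. To do so, I would exploit the fact that $P$ is the Epstein--Penner polygon associated to the embedded $H$: adjacent horoballs in the EP decomposition share a common supporting hyperplane in the Minkowski model, which combined with the embeddedness inequalities $h_v\ge 2r_p$ and $h_v\ge 2r_q$ yields $|p-q|\le 2\sqrt{r_p r_q}\le h_v$, so the hemisphere has height at most $h_v/2<h_v$ as required.

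To produce the necessary margin in this estimate under the Step~4 deformation and the perturbation from $H$ to $H'$, I would first replace $H$ by a slightly shrunken cusp $H_0$ still in the given neighborhood of $(R,H)$, apply the construction above to $(R,H_0)$, and then choose $H'$ in the decorated fiber over $R'$ close to $H_0$. Both perturbations can be taken arbitrarily small, so $(R',H')$ remains in the original prescribed neighborhood of $(R,H)$, and all three conclusions of the lemma follow.
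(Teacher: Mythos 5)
Your overall strategy---normalize each cusp to infinity and force every hemisphere not passing through the cusp point to have Euclidean radius below the horoball height---is sound in outline and close in spirit to the paper's proof, which achieves the same end by a softer, normalization-free compactness argument in the ball model: the green circle centers lie in a compact set $C$ at positive Euclidean distance $d_0$ from the compact collection of horoballs $\widetilde{H}$, so any hemisphere of radius below $d_0/2$ centered in $C$ is automatically disjoint from $\widetilde{H}$. However, your treatment of the blue circles, which you correctly identify as the crux of your version of the argument, rests on a false inequality. For two \emph{disjoint} horoballs centered at $p$ and $q$ with Euclidean radii $r_p$ and $r_q$, the correct relation is $|p-q|\ge 2\sqrt{r_p r_q}$, with equality precisely at tangency; this is the reverse of your claim $|p-q|\le 2\sqrt{r_p r_q}$. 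Embeddedness of the cusp neighborhood and the existence of a common supporting hyperplane in the Minkowski model give you no upper bound of this form on $|p-q|$, so your conclusion that the blue hemisphere over the edge $pq$ has height at most $h_v/2$ does not follow, and the step fails as written. (Note that the lemma as stated, and the paper's proof of it, only concern the circles \emph{of the packing}, i.e.\ the green circles; the blue hemispheres bound the fundamental domain and are not part of the circle packing of Lemma~\ref{lemma:CirclePack}, so you have taken on an additional claim that the lemma does not require and then justified it incorrectly.)

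Two smaller issues in the green-circle step: first, Step~4 does not merely deform the existing circles, it also \emph{adds} finitely many new circles to the quadrilateral interstices after the deformation, and you do not bound their radii; the paper handles this by arranging in advance that the interstices are small enough that any circle inscribed in them must have radius below the threshold. Second, ``radius below $h_v$'' depends on which cusp is normalized to infinity, so you need the bound simultaneously in each of the finitely many normalizations (and, strictly, against all group translates of the hemispheres, which is where an equivariance or compactness argument is cleaner than explicit radii). These are repairable with more care, but the blue-circle inequality is a genuine error that would need a different argument entirely.
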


\begin{proof}
Given $R$ and $H$, the cusp neighborhood $H$ lifts to a disjoint collection of horoballs in the Fuchsian manifold $\HH^3/\Gamma$. We let $\widetilde{H}$ denote finitely many horoballs, one at each ideal vertex of $P$.   
We prove the result by stepping back through the proof of Lemma~\ref{lemma:CirclePack}, ensuring that we may choose circles at each step to be disjoint from the collection $\widetilde{H}$, or to meet them only in noncompact regions if they share a point on the boundary at infinity. 

In the first step of the proof of Lemma~\ref{lemma:CirclePack}, we choose green circles tangent to vertices of $P$. To see that these can be chosen as in the statement of the lemma, consider the (finitely many) vertices of $P$ identified by face pairings. As we shrink any circle tangent to such a vertex, its images under face pairings also shrink. Thus we may shrink any such circle until the hyperplane bounded by it meets only the horoball at the ideal vertex at which it is tangent, and similarly for its translates under face pairings.

Now view the ball model of $\HH^3$ as the unit ball in $\RR^3$. Let $C$ denote the complement on $S^2$ of the interiors of disks bounded by blue circles and the green circles tangent to the equator, selected in the last paragraph. Then $C$ is compact, as is the collection of finite (closed) horoballs $\widetilde{H}$, and these two sets are disjoint. Thus the Euclidean distance $d(C,\widetilde{H})$ is some positive value $d_0>0$. Let $r_0=d_0/2$. Then any Euclidean hemisphere with center in $C$ of radius $r_0$ will have Euclidean distance at least $d_0/2$ from $\widetilde{H}$, hence its hyperbolic distance will also be bounded away from zero.

Use this to complete the second and third steps of selecting the circle packing. We may ensure that all circles selected in these steps have radius less than $r_0$; since their centers lie in $C$, they are disjoint $\widetilde{H}$. By compactness, we may also ensure that the triangle and quadrilateral interstices are shaped such that if we were to add any additional circles to the collection, those circles must have radius less than $r_0/2$, so adding any finite collection of circles to the existing collection yields new hemispheres disjoint from $\widetilde{H}$. 

Followng the fourth step, we next adjust the hyperbolic structure to $R'$ admitting a circle packing. Further, for any neighborhood of $(R',H')$ in decorated moduli space, we may obtain an embedded cusp neighborhood $H'$ of $R'$ such that $(R',H')$ lies in that neighborhood. Let $\widetilde{H}'$ denote the finitely many horoballs that are lifts of the horoball neighborhood $H'$ with centers on points corresponding to deformed points of $P$.

Before deforming $R$ to $R'$, the distance between horoballs $\widetilde{H}$ and those green hemispheres that are not tangent to the equator is bounded away from $0$, say by $d>0$, so we can ensure our deformation is small enough that the distance from new hemispheres to $\widetilde{H}'$ is still at least $d/2>0$. Moreover, we can ensure that after the deformation, interstices are still sufficiently small that any added circles correspond to hemispheres disjoint $\widetilde{H}'$. Thus when we complete to a circle packing, all green circles satisfy the conclusion of the lemma. 
\end{proof}

Theorem~\ref{Thm:Cusped} requires a finite volume manifold $M$. So far, Lemmas~\ref{lemma:CirclePack} and~\ref{lemma:CirclePackDisjointH} provide us with an infinite volume Fuchsian manifold admitting a circle packing (by green circles) on its conformal boundary. To form the finite volume 3--manifold, add \emph{red} circles: There exists a unique red circle that meets each vertex of each triangular interstice between green circles.
Figure~\ref{fig:blueRedPacking}(c) shows some of the dual red circles for the packing example from that figure.
The red and green circles meet orthogonally.

\begin{lemma}\label{lemma:RedCircles}
Red circles formed as above are disjoint from $\widetilde{H}'$. 
\end{lemma}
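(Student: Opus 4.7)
The plan is to fix a horoball $B \in \widetilde{H}'$ centered at an ideal vertex $v$ and pass to the upper half-space model with $v$ at infinity, so that $B = \{z \ge h\}$ for some $h > 0$. In these coordinates, Lemma~\ref{lemma:CirclePackDisjointH} tells us that the green circle $G_v$ tangent to the equator at $v$ corresponds to a vertical half-plane, while every other green hemisphere is a Euclidean hemisphere above a boundary circle in $\{z=0\}$ of Euclidean radius strictly less than $h$ (otherwise it would meet $B$ in a compact region, contradicting the lemma).

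Next I would record the observation that in a triangular interstice formed by three mutually tangent green circles $G_1, G_2, G_3$, at most one $G_i$ is the vertex circle $G_v$. Indeed, by Step~1 of the packing construction distinct vertex circles $G_{v'}, G_{v''}$ are chosen pairwise disjoint, hence never tangent, so cannot both appear in a single triangular interstice. Consequently either none or exactly one of $G_1, G_2, G_3$ equals $G_v$, and in either case all three pairwise tangency points lie in $\{z=0\}$. Thus the red circle $C$ lies in $\{z=0\}$ and the red hemisphere $\Sigma$ is a Euclidean hemisphere whose maximum height equals the Euclidean circumradius of the three tangency points.

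To bound this circumradius I would use the classical Euclidean fact that if three circles with radii $r_1, r_2, r_3$ are mutually externally tangent with centers $c_1, c_2, c_3$, then the tangency point $p_{ij}$ divides $c_ic_j$ in the ratio $r_i : r_j$ and coincides with the point where the incircle of the triangle $c_1c_2c_3$ touches the side $c_ic_j$. Hence the red circle is precisely the incircle of the triangle of centers, and Heron's formula simplifies the inradius to
\[
R \;=\; \sqrt{\frac{r_1 r_2 r_3}{r_1 + r_2 + r_3}}.
\]
In the degenerate case $G_1 = G_v$ one takes the limit $r_1 \to \infty$ to get $R = \sqrt{r_2 r_3}$, a value that can also be verified by a direct computation using the tangencies of two hemispheres with a common vertical tangent plane. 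In both cases one has $R \le \max_i r_i$: in Case 1 because $\max_i r_i^2\,(r_1+r_2+r_3) \ge \max_i r_i^3 \ge r_1 r_2 r_3$, and in Case 2 by AM--GM. Combining with the radius bound $\max_i r_i < h$ from the first paragraph, $\Sigma$ cannot reach height $h$ and is therefore disjoint from $B$.

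The main obstacle is confirming the Euclidean geometry identifying the red (orthogonal) circle with the incircle of the triangle of green centers, together with a clean treatment of the degenerate case where one green is the vertical plane $G_v$. Once those ingredients are in place, the bound $R < h$ is immediate from Lemma~\ref{lemma:CirclePackDisjointH}, and since the argument applies uniformly to every horoball of $\widetilde{H}'$ and every triangular interstice, the lemma follows.
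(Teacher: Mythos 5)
Your proof is correct, but it runs along a genuinely different route from the paper's. The paper argues by contradiction and normalizes at a \emph{tangency point} of the interstice: it sends the point where two green circles not tangent to the equator meet to infinity, so those two become parallel vertical planes, the third becomes the unit hemisphere, and the red circle becomes the vertical plane over the imaginary axis; a horoball disjoint from all three greens must then be centered outside the unit disk but inside the strip, and a short distance comparison shows it cannot reach the red plane. You instead normalize at the \emph{center of the horoball}, so the horoball becomes $\{z\ge h\}$, and you prove the quantitative statement that the red hemisphere has Euclidean radius at most $\max_i r_i$, via the identification of the red circle with the incircle of the triangle of green centers and the formula $R=\sqrt{r_1r_2r_3/(r_1+r_2+r_3)}$ (with the limit $\sqrt{r_2r_3}$ when one green is the vertical plane through $v$). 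Your version buys an explicit height bound on the red hemisphere and a direct (non-contradiction) argument, at the cost of the Euclidean incircle computation and a separate degenerate case; the paper's normalization avoids all formulas but needs the slightly fussier disjointness bookkeeping in the strip. Both proofs rest on the same two inputs: Lemma~\ref{lemma:CirclePackDisjointH}, and the claim that at most one side of a triangular interstice is tangent to the equator at the relevant vertex. On that last point your justification (distinct vertex circles are disjoint) covers vertex circles at \emph{different} ideal vertices but not the pair consisting of the vertex circle at $v$ and its mirror image under reflection in the equator, which are tangent to each other \emph{at} $v$; if both bounded one interstice the red circle would pass through $v$ and the conclusion would fail. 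This possibility is in fact excluded because the blue circles and the two vertex circles at $v$ cover a punctured neighborhood of $v$, so no interstice accumulates at $v$ --- but note the paper asserts the corresponding fact (``at most one of these circles is tangent to the equator'') without further justification either, so this is a shared, minor omission rather than a defect specific to your argument.
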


\begin{proof}
Suppose by way of contradiction that some red circle meets a horoball $H_0$ of $\widetilde{H}'$. The red circle is determined by a triangular interstice, with three sides determined by green circles. At most one of these circles is tangent to the equator, so at most one meets the center of a horoball. Thus two green circles disjoint from $\widetilde{H}'$ meet in a vertex $v$ of the interstice. 
Apply an isometry taking $\HH^3$ to the upper half space model, taking $v$ to infinity. Then we have two green hemispheres that are disjoint from $\widetilde{H}'$ mapping to parallel vertical planes. Ensure one maps to the vertical plane parallel to the real line through the point $-i$ in $\CC$, and one maps to the parallel vertical plane through $i$, and the third has boundary mapping to the unit circle in $\CC$. The horoball $H_0$ maps to a horoball disjoint from all three green circles, except possibly centered at a point on the unit circle. In any case, its center will be on $\CC$ in the infinite strip between the two parallel vertical planes. Finally, the red circle maps to the unique hyperplane meeting the three points of tangency of the green; this is the vertical plane with boundary on the imaginary axis of $\CC$.

By assumption, the image of $H_0$ intersects the red vertical plane. However, it cannot meet any of the green planes. Thus its Euclidean radius is strictly less than the imaginary coordinate of its center. Since its center lies no closer to the imaginary axis than the unit circle, it is impossible that it intersects the red plane while remaining disjoint from the two green vertical planes. This is a contradiction. 
\end{proof}

\begin{proof}[Proof of Theorem~\ref{Thm:Cusped}]
Starting with $(R,H)$, obtain $(R',H')$ and its circle packing as in Lemma~\ref{lemma:CirclePackDisjointH}, with $\Gamma'$ denoting the group of isometries of $\HH^3$ giving face pairings of the blue hemispheres associated with $R'$. Adjoin to $\Gamma'$ reflections in the green circles and reflections in the equatorial plane. Finally, adjoin reflections in the red planes. The quotient of $\HH^3$ by this group gives a finite volume hyperbolic 3--manifold with fundamental domain cut out by red, blue, and green hemispheres in $\HH^3$. It contains an embedded surface isometric to $R'$, namely the image of the equatorial plane.

By construction, each cusp of $R'$ is embedded in a distinct cusp of $M$. To finish the proof it suffices to show that the embedded cusp neighborhood $H'$, which has lengths within $\epsilon$ of those corresponding to $H$, lifts to an \emph{embedded} neighborhood of (some of) the cusps of $M$. 

For suppose lifts of $H'$ are not embedded. Since $H'$ is embedded in $R'$, horoballs $\widetilde{H}'$ with centers on the equator are all disjoint. Thus some horoball $H_1$ must intersect a horoball $H_2$ in the fundamental domain for $M$, and at least one of those horoballs, say $H_2$, does not have its center on the equator. But then $H_2$ cannot have its center at any of the ideal vertices of the fundamental domain. Hence $H_2$ must intersect one of the red, green, or blue faces of the fundamental domain in a compact region. Because $H_2$ is a translate of some horoball of $\widetilde{H}'$ under the fundamental group of $M$, it follows that one of the horoballs of $\widetilde{H}'$ meets a red, green, or blue hemisphere in a compact region. This contradicts our choice of circle packings.
\end{proof}

Theorem~\ref{Thm:ReducibleFilling} from the introduction is an immediate consequence of the above theorem with $R$ a planar surface. 

\begin{theorem}\label{Thm:ReducibleFilling}
Let $R$ be a hyperbolic structure on a planar surface with a fixed cusp neighborhood $H$. Then for any $\epsilon>0$, in any neighborhood of $R$ in its moduli space, there exists a hyperbolic surface $R'$, and a hyperbolic 3--manifold $M$ such that:
\begin{enumerate}
\item $R'$ is isometric to a totally geodesic surface embedded in $M$.
\item There exists a cusp neighborhood of $M$ such that each boundary slope of $R'$ in $M$ has length within $\epsilon$ of the corresponding length on $\partial H$ in $R$.
\item Dehn filling $M$ along the boundary slopes of $R'$ results in a reducible manifold.
\end{enumerate}
\end{theorem}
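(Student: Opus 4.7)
The plan is to derive Theorem~\ref{Thm:ReducibleFilling} from Theorem~\ref{Thm:Cusped} by specializing to the planar case and supplementing it with a short argument for the reducibility in part~(3). First I would apply Theorem~\ref{Thm:Cusped} to the given planar surface $R$ together with its fixed embedded cusp neighborhood $H$. This produces a nearby hyperbolic surface $R'$ in moduli space, a finite-volume cusped hyperbolic 3--manifold $M$ containing an isometric totally geodesic embedded copy of $R'$, and an embedded cusp neighborhood of $M$ whose induced slope lengths on $R'$ match those of $\partial H$ on $R$ to within $\epsilon$. Parts (1) and (2) of the theorem are immediate from parts (1) and (2) of Theorem~\ref{Thm:Cusped}, and part~(3) of Theorem~\ref{Thm:Cusped} further guarantees that each cusp of $R'$ is embedded in a distinct cusp of $M$.

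For part~(3) of the present theorem, let $s_1,\dots,s_n$ be the boundary slopes of $R'$, and form the Dehn filling $N = M(s_1,\dots,s_n)$. In each filling solid torus $V_i$ the slope $s_i$ bounds a meridian disk $D_i$. Because $R'$ has genus zero and each boundary component is capped off by a single meridian disk lying in a distinct $V_i$, the union $S = R' \cup D_1 \cup \cdots \cup D_n$ is an embedded 2--sphere in $N$.

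It then remains to show that $S$ does not bound a $3$--ball. The cleanest route is a parity argument using the cores of the filling solid tori. Let $c_i \subset V_i$ denote the core circle of $V_i$. Then $c_i$ is a closed curve in $N$ that meets $S$ transversally in exactly one point, namely the single point $c_i \cap D_i$, and is disjoint from $S$ elsewhere. If $S$ bounded a $3$--ball $B$, then the closed loop $c_i$ would have to enter and leave $B$ the same number of times, so $|c_i \cap S|$ would be even; this contradicts $|c_i \cap S| = 1$. Hence $S$ is an essential sphere in $N$, and $N$ is reducible.

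The real content of the theorem is supplied by Theorem~\ref{Thm:Cusped}, so I do not anticipate any serious obstacle here. The one point requiring care is that conclusion~(3) of Theorem~\ref{Thm:Cusped} (distinct cusps of $R'$ sitting in distinct cusps of $M$) is genuinely used: without it, several boundary components of $R'$ could be filled inside a single cusp torus of $M$, the capped-off object need not be a sphere, and the parity argument above would no longer apply.
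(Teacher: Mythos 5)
Your proposal is correct and follows essentially the same route as the paper: apply Theorem~\ref{Thm:Cusped} to $(R,H)$, cap off the punctures of the planar surface $R'$ with meridian disks of the filling solid tori to get an embedded sphere $S$, and observe that the core of a filling solid torus meets $S$ exactly once, so $S$ cannot bound a ball. Your explicit parity argument and your remark that conclusion~(3) of Theorem~\ref{Thm:Cusped} (distinct cusps of $R'$ in distinct cusps of $M$) is what makes the capped-off surface a sphere meeting each core once are both accurate and match the paper's (terser) reasoning.
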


\begin{proof}
Take the manifold $M$ of Theorem~\ref{Thm:Cusped}, with embedded surface $R_0:=R'$ in the neighborhood of $R$ in the moduli space, and cusp neighborhood satisfying the requirements of the theorem. 
When we Dehn fill along boundary slopes of $R'$, we attach a collection of disks to $R'$, capping off the punctures to obtain a sphere $S$. Dehn filling along a single cusp attaches a disk to $R'$ that meets the core $c$ of the Dehn filling solid torus exactly once. Because each cusp of $M$ meets at most one cusp of $R'$, it follows that after all Dehn fillings, the core $c$ meets the sphere $S$ exactly once. Hence $S$ cannot bound a ball to either side, and so it is a reducing sphere for the Dehn filling.
\end{proof}


\section{Packings of planar surfaces}
In light of the previous section, in order to find 3--manifolds with reducible fillings with long slopes, it suffices to find horoball packings of planar surfaces where all horoballs are above a given length. In this section, we construct horoball packings of a planar surface where each horoball has length at least $10/\sqrt{3}-\epsilon$ (with $\epsilon > 0$). Our construction relies on a limiting argument, and so the bound of $10/\sqrt{3}$ is only realized asymptotically.

The remainder of this section will be dedicated to proving the following proposition. 

\begin{prop}\label{prop:planar_realization}
For any $\epsilon > 0$, there is a planar surface $F$ with hyperbolic structure and an embedded horoball neighborhood of the cusps of $F$ such that each cusp neighborhood has area $A$ with $A > 10/\sqrt{3}-\epsilon$. 
\end{prop}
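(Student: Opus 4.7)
My plan is to construct, for each $\epsilon > 0$, an explicit hyperbolic structure on a planar surface with embedded horoball neighborhoods whose cusp areas all exceed $10/\sqrt{3} - \epsilon$.  The construction will be a member of a sequence $\{F_n\}$ whose minimum cusp area $A_n$ tends to $10/\sqrt{3}$ as $n\to\infty$.

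First I would fix the combinatorial model: an ideal polygonal decomposition of a sphere with $n$ punctures.  (For a triangulation, Euler characteristic forces exactly $2n-4$ triangles.)  The hyperbolic structure is then determined up to the choice of horoball decoration at each cusp (Penner's decorated Teichm\"uller coordinates). Equivalently, one realizes $F_n$ in the upper half-plane model of $\HH^2$ via a fundamental ideal polygon $P_n$ with side pairings, chosen with enough symmetry that the cusp areas can be equalized across cusps.

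Next, at each cusp I would place the maximal embedded horoball, compute each cusp's area as an explicit function of the decorations and combinatorics, and tune the free parameters so that the minimum cusp area is as large as possible; call this maximum $A_n$.  The goal is to show $A_n \to 10/\sqrt{3}$ as $n\to\infty$: this should correspond to a limiting horoball configuration (presumably a periodic planar arrangement with some particular local valence pattern) whose horocycle length at each cusp equals exactly $10/\sqrt{3}$.  Given this, for any $\epsilon>0$ pick $n$ large with $A_n > 10/\sqrt{3} - \epsilon$ and set $F = F_n$.

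The main obstacle is identifying the correct limiting configuration and verifying that the limit is precisely $10/\sqrt{3}$.  The optimal horoball packing density in $\HH^2$ would in principle permit cusp areas approaching $6$ (realized in the universal-cover triangulation where every cusp meets infinitely many triangles), so the gap between $10/\sqrt{3} \approx 5.77$ and $6$ must come from the planarity (genus zero) constraint, which prevents arbitrarily high local valence at every cusp simultaneously.  Pinning down how this global constraint produces exactly $10/\sqrt{3}$ --- rather than a larger or smaller value --- will require an explicit geometric calculation in the model configuration, and careful bookkeeping to show that \emph{every} cusp of $F_n$ simultaneously attains area close to $A_n$ (not just the ``interior'' ones).
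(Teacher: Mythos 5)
Your high-level strategy matches the paper's: build a sequence of decorated ideal triangulations of punctured spheres whose minimum cusp area tends to $10/\sqrt{3}$, and for each $\epsilon$ take a far-enough term. But everything that actually constitutes the proof is deferred to your acknowledged ``main obstacle,'' and that obstacle is the entire content of the argument. Concretely, you are missing: (i) the combinatorial model --- the paper starts from the icosahedral triangulation of the $12$--punctured sphere (twelve vertices of degree five) and subdivides each face into $4m^2$ triangles so that every new vertex has degree six; (ii) the decoration scheme --- each triangle is assigned cusp areas $[a,b,c]$ subject to compatibility conditions (adjacent triangles must induce the same edge length between horoballs, via the formula $d=-\log(ab)$), with triangles at ``height $j$'' in each subdivided face decorated $[c_j,1/c_j,1/c_j]$; and (iii) the mechanism that produces the constant $10/\sqrt{3}$ --- equalizing the cusp areas along an edge of the big triangle yields the recursion $c_{j+1}=4/(L-2c_j)$, a M\"obius map whose attracting fixed point is $2/\sqrt{3}$ exactly when $L=10/\sqrt{3}$, and the twelve degree-five vertices then have area $5c_m\to 5\cdot 2/\sqrt{3}=10/\sqrt{3}$. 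One must also verify separately that the remaining vertex types (there are six in the paper's bookkeeping, including those in the ``gray'' interior region decorated $[1,1,1]$) all have area at least $10/\sqrt{3}$, which requires a specific choice of $c_1\approx 1.029$. Without these ingredients your proposal does not establish the statement.

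A smaller but real point: you frame the task as showing the optimum $A_n$ converges to \emph{precisely} $10/\sqrt{3}$. The proposition only requires the lower bound --- exhibiting one packing per $\epsilon$ with all cusp areas exceeding $10/\sqrt{3}-\epsilon$. The matching upper bound (that no horoball packing of a planar surface has every cusp of area $\ge 10/\sqrt{3}$) is not proved in the paper; it is stated only as a conjecture. So your plan, as written, aims at a strictly harder and still-open statement, while omitting the construction needed for the statement actually at hand. Your heuristic that planarity (Euler characteristic forcing some vertices of degree below six) is the source of the deficit from $6$ is correct in spirit, but it does not by itself yield the constant.
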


\subsection{The triangulated surfaces}
First, we construct a sequence $\{(F_m, T_m)\}$, where $F_m$ is a planar surface (later decorated with a specific horoball packing), and $T_m$ is a triangulation of $F_m$. 

To begin consider the icosahedron, which gives a triangulation 
of the sphere with 12 vertices (each of degree five), 30 edges, and 20 triangular faces. This also gives an ideal triangulation of the 12--punctured sphere where each ideal vertex is degree 5. We call this triangulation $T_0$ and the corresponding surface $F_0$. Initially, we will consider $F_0$ as a topological surface, namely a 12--punctured sphere. Later, we will decorate $F_0$ with geometric data. We will consider $(F_0,T_0)$ as the starting point of a sequence of triangulations of planar surfaces.

We obtain $(F_m, T_m)$ from $(F_0, T_0)$, for $m\geq 1$, as follows. 
For each triangular face of $(F_0, T_0)$, partition each of the three edges of the triangle into $2m$ pieces by adding $2m-1$ new vertices in the interior of the edge. Then, connect the vertices along edges by lines parallel to the sides of the triangle, as in Figure~\ref{fig:subdivideTriangle}. This adds $4m^2$ triangles to each triangular face of $(F_0, T_0)$, giving a new triangulation of $S^2$. Note twelve of the vertices remain degree five, but all additional vertices have degree six. Again make each vertex an ideal vertex, and denote the result by $(F_m, T_m)$; the surface $F_m$ is a punctured sphere.
For $m\geq 1$, the resulting triangulation $T_m$ will have $80m^2$ triangular faces, $40m^2+2$ vertices and $120m^2$ edges.

\begin{figure}
\input{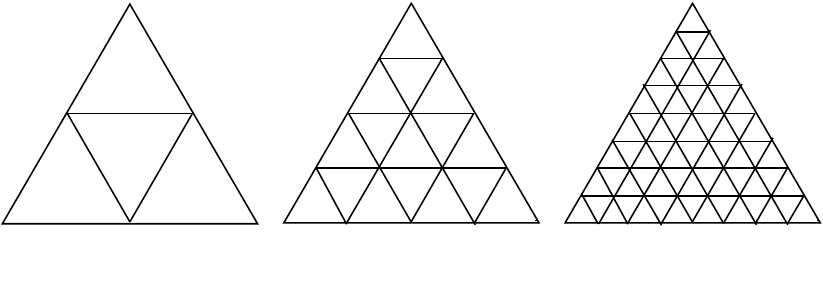_t}
\caption{Subdivisions of triangles making up the icosohedron.}
  \label{fig:subdivideTriangle}
\end{figure}

\subsection{Local packings of the surfaces}
We now decorate the triangulation $T_m$ by assigning a cusp area to each ideal vertex of each ideal triangle. We denote by $[a,b,c]$ a packing of an ideal triangle with cusps of areas $a,b,c$.
If a cusp has area $a$, then the triangle is isometric to a triangle in $\HH^2$ with vertices $0$, $a$, and $\infty$, with a cusp about infinity of height $1$ (hence area $a$). Cusps with areas $b$ and $c$ are mapped into horoballs centered at $0$ and $a$, respectively. See Figure~\ref{fig:triangles}(a).

\begin{figure}
\begin{tabular}{ccc}
  \input{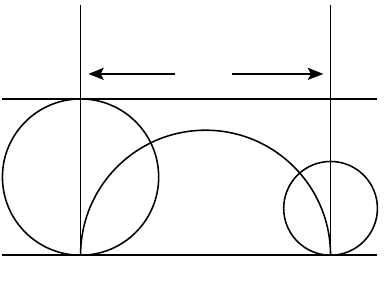_t} & \hspace{.2in} &  \import{Figures/}{hyperbolicQuad.pdf_t} \\
  (a) & & (b) \\
\end{tabular}
  \caption{(a) Ideal triangle with vertices $0$, $a$, $\infty$, and cusp areas $a$, $b$, $c$. (b) Ideal quadrilateral made of two triangles with decorations $[a, 1/a, 1/a]$.}
  \label{fig:triangles}
\end{figure}

\begin{define}\label{def:PackingConditions}
A choice of decorations for each triangle of a triangulation $(F, T)$ is said to be \emph{geometric} if the following conditions hold. 
\begin{enumerate}
\item[(A)] No vertex of a triangle is decorated with area more than two.
\item[(B)] If one vertex is decorated with $A$, the remaining vertices are decorated with areas no more than $1/A$.
\item[(C)] If two triangles share an edge in a triangulation, with ideal vertices of that edge labeled $a_1$ and $b_1$ in one triangle, and $a_2$ and $b_2$ in the other, then $a_1b_1 = a_2b_2$. 
\end{enumerate}
\end{define}

Condition (A) ensures that the cusps of the triangle are embedded in the triangle. Condition (B) ensures that the cusps are disjoint in each triangle. Finally, condition (C) ensures that triangles can be glued together across edges of the triangulation $T$, for in that case the edge between the triangles will have the same length outside the horoballs corresponding to cusp areas, as ensured by the following lemma. 

\begin{lemma}\label{lemma:DistBetweenCusps}
On an ideal triangle with embedded cusps of area $a$ and $b$, the distance between those cusps is $-\log(ab)$.
\end{lemma}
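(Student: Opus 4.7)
The plan is to work in the upper half-plane model of $\HH^2$, placing the ideal triangle in the normalized position shown in Figure~\ref{fig:triangles}(a): vertices at $0$, $a$, and $\infty$, with the cusp of area $a$ at $\infty$ and the cusp of area $b$ at $0$. The distance in question is then measured along the geodesic edge from $0$ to $\infty$, which is the positive imaginary axis, so once I identify the two horoballs' intersection with this axis the answer is a one-variable integral.

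The first step is to verify, as Figure~\ref{fig:triangles}(a) already asserts, that the cusp at $\infty$ of area $a$ is the horoball $\{y \geq 1\}$. Using the hyperbolic area element $dx\,dy/y^2$, the area of the truncated strip $\{0 \leq x \leq a,\ y \geq h\}$ equals $a/h$, which equals $a$ precisely when $h = 1$. So the horoball at $\infty$ meets the imaginary axis at height $1$.

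To locate the cusp at $0$, I apply the isometry $z \mapsto -1/z$, which swaps the vertices at $0$ and $\infty$ and sends the triangle to one with vertices $-1/a$, $0$, $\infty$ of Euclidean width $1/a$. In this new picture the cusp of area $b$ now sits at $\infty$, so by the same area computation (with width $1/a$ in place of $a$) it is the horoball $\{\operatorname{Im} w \geq 1/(ab)\}$. A direct computation shows that the preimage of $\{\operatorname{Im} w \geq h\}$ under $z \mapsto -1/z$ is the Euclidean disk tangent to the real axis at $0$ with diameter $1/h$; applied with $h = 1/(ab)$, this gives a horoball at $0$ of Euclidean diameter $ab$, whose topmost point on the imaginary axis sits at height $ab$.

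The conclusion then follows by integrating the hyperbolic line element along the imaginary axis between the two cusps: the distance is $\int_{ab}^{1} dy/y = -\log(ab)$. The only step requiring any genuine thought is the M\"obius-transformation computation identifying the horoball at $0$, but it is entirely routine; the hypothesis that the cusps are embedded ensures $ab \leq 1$, so the horoballs are actually disjoint along the edge and the integral is non-negative.
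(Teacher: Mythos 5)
Your proof is correct and follows essentially the same route as the paper: normalize the triangle to have vertices $0$, $a$, $\infty$ with the area-$a$ cusp at height $1$, apply an inversion swapping $0$ and $\infty$ to identify the Euclidean diameter of the horoball at $0$ as $ab$, and read off the distance along the imaginary axis. The paper uses $z\mapsto -h/z$ and solves for $h$ by tracking the vertex $a$, whereas you use $z\mapsto -1/z$ and compute the image cusp's area directly, but this is only a cosmetic difference.
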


\begin{proof}
The proof is by elementary hyperbolic geometry. One way to see it is to consider the ideal triangle with vertices $0$, $a$, and $\infty$, with the cusp about $\infty$ of height $1$ (area $a$), and the cusp of area $b$ at $0$. The desired distance is $-\log(h)$ where $h$ is the diameter of the horoball about $0$. Apply the isometry $z\mapsto -h/z$, taking $0$ to $\infty$, $\infty$ to $0$, and $ih$ to $i$. Comparing lengths on the triangle, this must take the point $a$ to $-b$. Thus $-h/a=-b$, and $h=ab$. 
\end{proof}

\begin{lemma}\label{lemma:GeomTriang}
A decoration of $(F,T)$ that is geometric, in the sense of Definition~\ref{def:PackingConditions}, 
determines a complete hyperbolic structure on $F$ with a choice of horoball neighborhood at each cusp, such that horoball neighborhoods are embedded.
\end{lemma}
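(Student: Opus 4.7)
The plan is to build the hyperbolic structure triangle by triangle and then verify that conditions (A), (B), and (C) interlock to give a complete structure with embedded horoball neighborhoods. I would first realize each triangle of $T$ as an ideal hyperbolic triangle in $\HH^2$ with horoballs of the prescribed areas placed at its three corners. Since ideal triangles are unique up to isometry and every ideal vertex admits a unique horoball of any prescribed area, this pins down the local geometry of each triangle.

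Within a single triangle, conditions (A) and (B) should imply that the horoball pieces are embedded and pairwise disjoint. For (A), I would use the standard realization with vertices at $0$, $a$, $\infty$ and horoball of area $a$ at $\infty$; the opposite edge is a Euclidean semicircle of radius $a/2$, whose apex lies at height $a/2 \le 1$, so the horoball lies above this edge and is contained in the triangle. For (B), if two horoballs in a common triangle have areas $a$ and $b$ with $ab \le 1$, then Lemma~\ref{lemma:DistBetweenCusps} gives their distance as $-\log(ab) \ge 0$, so they are disjoint.

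Next I would glue the triangles along shared edges. For each edge, Lemma~\ref{lemma:DistBetweenCusps} combined with condition (C) shows that the length of the edge segment lying between the two cusp regions at its endpoints is the same from both sides. This pins down a unique orientation-reversing isometry of the edge that identifies the endpoints correctly and sends the pair of horocycle--edge intersection points on one side to the pair on the other. Performing this identification on every shared edge assembles the triangles into a hyperbolic surface homeomorphic to $F$, and the horoball pieces around each ideal vertex glue together into a single horocyclic neighborhood of that cusp.

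The last step is to verify completeness together with global embeddedness of the resulting horoball neighborhoods. The main obstacle I anticipate is making certain that the local edge-by-edge matching prescribed by (C) really does assemble the horocycle arcs in adjacent triangles into a single closed horocycle around each cusp of $F$; this is precisely the statement that the holonomy around the cusp is parabolic, which in turn is equivalent to completeness for a finite-area hyperbolic surface made from finitely many ideal triangles. Granted this, global embeddedness of the horoball neighborhood at each cusp follows from its embedding inside each adjacent triangle (from (A) and (B)) together with the consistent matching across every shared edge (from (C)).
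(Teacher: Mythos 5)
Your proposal is correct and follows essentially the same route as the paper: realize each decorated triangle as an ideal hyperbolic triangle with horoballs of the prescribed areas, use (A) and (B) for embeddedness and disjointness within each triangle, and use (C) together with Lemma~\ref{lemma:DistBetweenCusps} to match the horocycle--edge intersection points across each shared edge, which is exactly the consistency the paper verifies by developing the cycle of triangles around a cusp in $\HH^2$ and checking that no rescaling occurs. The one step you defer (``granted this'') is in fact immediate from your own gluing choice: since each edge identification sends the horocycle intersection points on one side to those on the other at both endpoints, the horocyclic arcs concatenate end-to-end around the cusp and return to their starting point after one circuit, so the holonomy preserves a horocycle, is parabolic, and the structure is complete.
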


\begin{proof}
We obtain a hyperbolic structure by giving each ideal triangle of $T$ the structure of a hyperbolic ideal triangle. For each such triangle, take horoball neighborhoods of each ideal vertex that have the areas specified by the decoration. These will give embedded horoballs about the cusps of $F$. We need to check that the resulting structure will be complete. 
Completeness follows from the Poincar\'e theorem (see, e.g.\ \cite{epstein-petronio}) and condition (C). By the Poincar\'e theorem, the structure will be complete provided horoball neighborhoods of the cusps ``close up,'' i.e.\ a curve running once around a cusp lifts to a Euclidean translation along a horocycle. In terms of a triangulation, this means that if we lift the triangles meeting a cusp in a cycle, the initial and final triangle (which are identified by a covering transformation) differ only by a Euclidean translation, with no rescaling. In our case, lift the initial triangle to $\HH^2$ with cusp at $\infty$. Our decoration determines a horoball about $\infty$. The lengths of edges between this cusp and the other two determine (Euclidean) diameters of the other two horoballs. Successively lift other ideal triangles to form a cycle. Condition (C) ensures the horoballs between them are consistent. Finally, when we lift the final triangle, because we have fixed the distance between horoballs by (C), and the horoball about $\infty$ has not changed, the diameters of horoballs on $\RR$ must also have the same diameter as in the initial triangle. Thus the initial and final triangles cannot differ by a (non-trivial) scaling, and must differ only by a Euclidean translation. 
\end{proof}

In our construction below, conditions (A) and (B) of Definition~\ref{def:PackingConditions} will be easy to check. For condition (C), for many triangles, we will identify one marked edge of the triangle, such that two triangles are glued across marked edges forming a quadrilateral (with marked edge as the diagonal). We will choose cusp areas on opposite vertices of the quadrilateral to have areas $a$ and $1/a$, respectively, as in Figure~\ref{fig:triangles}(b). This will ensure that the diagonal edge has the same length between horoballs in each triangle. Moreover, exterior edges of the quadrilateral all have length zero. Hence we may glue any two such quads together along an exterior edge consistently.
For the remaining triangles, which will not fit into quadrilaterals, we must check edge lengths between horoballs using the formula of Lemma~\ref{lemma:DistBetweenCusps}. 

We now construct a horoball packing of a planar surface with the desired properties.
 
For each large triangle region in $T_m$ that was formed by replacing one triangle in $T_0$ with $4m^2$ triangles, color the middle $m^2$ triangles in the region gray, as in Figure~\ref{fig:largeTri}(a), and color the remaining triangles white.

\begin{figure}
  \centering
  \begin{tabular}{lclcl}
\resizebox{1.3in}{!}{\includegraphics[width=2in]{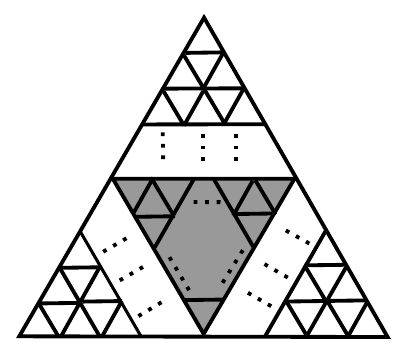}} & &
    \resizebox{1.3in}{!}{\import{Figures/}{triVertexLabel.pdf_t}} & & 
    \resizebox{1.3in}{!}{\import{Figures/}{triLabeled.pdf_t}} \\
    (a) &  & (b) & & (c)
  \end{tabular}
  \caption[]{(a) Shade the middle triangles gray. (b) Types of vertices. (c)~Decorated packing.
}
  \label{fig:largeTri}
\end{figure}

In order to describe our horoball packing, we will consider six types of vertices in the small triangles appearing in $T_m$, also shown in Figure~\ref{fig:largeTri}(b): 
\begin{enumerate}
\item\label{itm:LargeWhiteCorner} vertices of degree five at the corners of the large triangular regions coming from the triangles of the original icosohedron; 
\item\label{itm:IntWhiteEdge} vertices along the edges of the white triangular regions, thus lying on an edge of the original triangle, but in the interior of that edge;
\item\label{itm:IntWhite} interior vertices in the white triangular regions;
\item\label{itm:GrayCorner} vertices where four white triangular regions and two gray triangular regions are identified;
\item\label{itm:IntGrayEdge} vertices on the edges of a gray triangular region, but in the interior of the edge; and
\item\label{itm:IntGray} vertices in the interior of the gray triangular regions. 
\end{enumerate}

The proof will assign a cusp area to each cusp of each triangle in $T_m$ depending on the pattern of the vertices in that triangle.

\begin{lemma}\label{lemma:TriangulationConstruction}
For any $m$, the triangulated surface $(F_m, T_m)$ constructed above admits a decoration depending on a choice of positive areas $\{c_1, \dots, c_m\}$ such that, if $1\leq c_i \leq 2$, then the decoration gives a complete hyperbolic structure on $F_m$ and embedded cusp neighborhoods with areas of the cusp neighborhoods satisfy the following. 
\begin{itemize}
\item On the 12 vertices of type \eqref{itm:LargeWhiteCorner}, the area is $L_m = 5\,c_m$.
\item On vertices of type \eqref{itm:IntWhiteEdge}, the area is $L_j = 4/c_{j+1} + 2c_j$, for $1\leq j<m$. 
\item On vertices of type \eqref{itm:IntWhite}, the area is $L_j' = 4/c_{j+1} + c_j +c_{j+2}$ for $1 \leq j < m-1$.
\item On vertices of type \eqref{itm:GrayCorner}, the area is $2/c_1^2 + 4/c_1$.
\item On vertices of type \eqref{itm:IntGrayEdge}, the area is either $c_2 + 2 + 3/c_1$, or $c_2+1+4/c_1$. 
\item On vertices of type~\eqref{itm:IntGray}, the area is $2\,c_1 + 4$, or $c_1+5$, or $6$.
\end{itemize}
\end{lemma}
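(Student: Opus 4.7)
The plan is to construct an explicit decoration of each triangle of $(F_m,T_m)$ depending on $c_1,\dots,c_m$, verify that it satisfies conditions (A), (B), (C) of Definition~\ref{def:PackingConditions}, and then apply Lemma~\ref{lemma:GeomTriang} to obtain the claimed complete hyperbolic structure with embedded cusp neighborhoods. First I would describe the combinatorial structure: each of the $20$ large triangles of the icosahedral subdivision decomposes into three \emph{white caps} (each an upright equilateral sub-triangle of side $m$ at a corner) and a central \emph{gray region} (an inverted equilateral sub-triangle of side $m$). Within each white cap I index parameters so that $c_j$ is attached to vertex-row $m-j$ counted from the apex, so $c_m$ sits at the type~(1) apex and $c_1$ sits along the white/gray interface.

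Next I would specify the decoration, making heavy use of the quadrilateral pattern $[a,1/a,1/a]$ of Figure~\ref{fig:triangles}(b). In each white cap I pair each apex-up small triangle in triangle-row $k$ (counted from the apex, $1 \le k \le m-1$) with the apex-down small triangle in triangle-row $k+1$ across the horizontal edge they share in vertex-row $k$. On each such rhombic pair I set $a = c_{m-k+1}$, placing the value $a$ at the two non-shared apices and $1/a$ at the two shared-edge endpoints; this automatically satisfies condition~(C) on the shared horizontal edge by Lemma~\ref{lemma:DistBetweenCusps}. The remaining unpaired triangles are the $m$ apex-ups in the cap's bottom triangle-row (whose base edges form the white/gray interface) together with all $m^2$ triangles of the gray region; these I would decorate with values drawn from $\{1,\, c_1^{\pm 1},\, c_1^{\pm 2}\}$ arranged so that condition~(C) holds across the interface edges and throughout the gray interior.

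With the decoration specified, I would verify (A)--(C). Conditions (A) and (B) follow from $1 \le c_i \le 2$ since every assigned area lies in a bounded range and, in each triangle, the product of the two ``base'' areas is at most the reciprocal of the ``top'' area. Condition~(C) is automatic on paired diagonals and elsewhere reduces, via Lemma~\ref{lemma:DistBetweenCusps}, to a case-by-case check that the endpoint-area products match on both sides of each edge. Finally I would sum contributions at each vertex type. A type~(1) vertex has valence five with each incident triangle contributing $c_m$, giving $L_m = 5c_m$. At a type~(2) vertex on vertex-row $m-j$ from the apex, the six incident triangles (three on each side of the icosahedron edge) split $4$:$2$ between contributions $1/c_{j+1}$ (from the apex-up with base at the vertex and the paired apex-down in the row above) and $c_j$ (from the apex-up with apex at the vertex), giving $L_j = 4/c_{j+1} + 2c_j$. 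Type~(3) interior white vertices similarly have valence six and their six contributions, split across three rows, give $L_j'=4/c_{j+1}+c_j+c_{j+2}$. The gray-region types~(4)--(6) are counted analogously, the two alternatives for type~(5) and three for type~(6) reflecting the combinatorially distinct positions of the vertex relative to the boundary ``collar'' of gray triangles.

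The principal obstacle is the bookkeeping at the white/gray interface, where the $c_j$-pattern of the cap must be reconciled with the almost-constant pattern of the gray region. Ensuring condition~(C) along this boundary forces the introduction of the powers $c_1^{\pm 2}$ at type~(4) vertices and generates the alternative formulae for types~(5) and~(6). Once this interface collar is drawn out consistently, every remaining check is local, and Lemma~\ref{lemma:GeomTriang} then produces the complete hyperbolic structure on $F_m$ with the specified embedded cusp neighborhoods.
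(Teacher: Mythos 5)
Your approach is the same as the paper's: decorate the white caps with the $[a,1/a,1/a]$ rhombus pattern, row by row with parameter $c_{m-k+1}$ on the apex-up triangle of triangle-row $k$, treat the gray region separately, verify conditions (A)--(C) of Definition~\ref{def:PackingConditions}, and invoke Lemma~\ref{lemma:GeomTriang}. Your white-cap construction and the incidence counts giving $L_m$, $L_j$ and $L_j'$ are correct and match the paper. One point worth making explicit in your condition-(C) check: every exterior edge of an $[a,1/a,1/a]$ rhombus has endpoint product $a\cdot(1/a)=1$, hence length $0$ by Lemma~\ref{lemma:DistBetweenCusps}, which is why adjacent rhombi with different parameters glue consistently without any case analysis.

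The one genuine gap is the gray region. You assert that a decoration by values in $\{1,c_1^{\pm1},c_1^{\pm2}\}$ ``can be arranged'' so that (C) holds across the interface, but the last three bullet points of the lemma --- in particular the area $2/c_1^2+4/c_1$ at type-(4) vertices, which is exactly the binding constraint later in Lemma~\ref{lemma:cjChoice} --- are the output of that arrangement, so it must be exhibited, not postulated. The paper's choice is short: each of the three corner triangles of a gray region gets $[1,1,1/c_1^2]$ with $1/c_1^2$ at the type-(4) vertex; every other gray triangle with an edge on the white/gray interface gets $[c_1,1/c_1,1/c_1]$ with $c_1$ opposite that edge (so it is precisely the missing rhombus partner of a bottom-row white triangle); all remaining gray triangles get $[1,1,1]$. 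With this choice every interface edge has endpoint product $1/c_1^2$ on both sides, every interior gray edge has product $1$, only $c_1^{-2}$ (not $c_1^{+2}$) is ever needed, and summing contributions at each vertex yields exactly the stated areas for types (4), (5) and (6), with the two (resp.\ three) alternatives corresponding, as you predicted, to the vertex's position relative to the corners and boundary of the gray region. Writing out this collar is the step your proposal still owes.
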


\begin{proof}
Consider first one of the three white triangular regions making up the original larger triangle of the icosohedron. Let $v$ be a vertex coming from the original triangle, of type~\eqref{itm:LargeWhiteCorner}, and let $T$ denote the (small) triangle of $T_m$ that lies in this triangular region and that has $v$ as a vertex. Inside $T$, decorate $v$ with $c_m \geq 1$. The value $c_m$ will correspond to the area of the cusp in $T$. Note that with this decoration in all triangles adjacent $v$, the cusp of $F_m$ corresponding to $v$ will have area $L_m = 5 c_m$. 

Now consider the other vertices of the (small) white triangle $T$. Decorate the two remaining vertices with $1/c_m$. 
Across the edge of $T$ opposite $v$, apply the mirror image decoration. Thus $T$ and its adjacent triangle in the large white triangular region will both have packing $[c_m,1/c_m,1/c_m]$, as in Figure~\ref{fig:largeTri}(c).

Next consider the edges of the large white triangular regions, and the vertices in the interior of this edge, i.e.\ those of type~\eqref{itm:IntWhiteEdge}. For ease of explanation, rotate the triangular region such that the vertex $v$ lies at the top. Then (if $m>1$) the vertex nearest $v$ belongs to exactly one small white triangle in this triangular region that has not yet been labeled, and this is the top-most triangle lying along this edge with top vertex in the interior of the edge. Decorate the vertices of this triangle with $[c_{m-1},1/c_{m-1},1/c_{m-1}]$, and decorate the triangle sharing the bottom edge of this triangle with the reflected packing. These two triangles together form a diamond, with top and bottom vertices labeled $c_{m-1}$.

Now consider the ``diamonds'' just below this top-most diamond, at height $m-2$. Label the top triangle in the diamond by $[c_{m-2}, 1/c_{m-2}, 1/c_{m-2}]$, with $c_{m-2}$ at the top, and assign labels to the bottom triangle by reflecting across the common edge of the two triangles in the diamond. Inside the large white triangular region, there are exactly three diamonds at height $m-2$. Label all three of them in this manner, as in Figure~\ref{fig:largeTri}(c).

Continue, labeling all diamonds at height $k$ with labels $[c_k, 1/c_k, 1/c_k]$. Repeating this process will label all interior vertices of all triangles inside the large white triangular region, with the exception of the triangles across the bottom, sharing an edge with a gray triangle. Label these triangles $[c_1, 1/c_1, 1/c_1]$, with $c_1$ at the top, as in Figure~\ref{fig:largeTri}(c). 

Using this labeling, a vertex at height $j$ of type~\eqref{itm:IntWhiteEdge}, i.e.\ along an edge of the white triangular region in the interior of the edge, will glue up to a cusp with area $L_j=(4/c_{j+1})+2 c_{j}$ for $1\leq j < m$. A vertex of type~\eqref{itm:IntWhite}, in the interior of the white triangular region, will have area $L'_j=(4/c_j)+c_{j+1}+ c_{j-1}$ for $2\leq j < m$.

Now we label gray triangles. For one of the three gray triangles in the corners of the large gray triangular region, decorate the vertices with $[1, 1, 1/c_1^2]$, such that the cusp labeled $1/c_1^2$ corresponds to a vertex of type~\eqref{itm:GrayCorner}.
This triangle has two sides sharing an edge with a small white triangle, with labels on the endpoints of the edges in those white triangles given by $1/c_1$ and $1/c_1$. 
Note by Lemma~\ref{lemma:DistBetweenCusps} that the length of the edge shared by these white and gray triangles will be $-2\log(1/c_1)$ in both triangles, and so the gluing will satisfy condition~(c) of Definition~\ref{def:PackingConditions}. The total area of the cusp at the vertex of type~\eqref{itm:GrayCorner} will be $4/c_1 + 2/c_1^2$.

For the other gray triangles, if the gray triangle shares an edge with a white triangle, decorate its cusps with $[c_1, 1/c_1, 1/c_1]$, with $c_1$ opposite that edge. For all other gray triangles in the interior, decorate their vertices with $[1,1,1]$.

Thus a vertex of type~\eqref{itm:IntGrayEdge} will correspond to a cusp of area $2+3/c_1 + c_1$ if the vertex shares an edge with a vertex of type~\eqref{itm:GrayCorner}, and area $c_2+4/c_1+1$ otherwise. Vertices of type~\eqref{itm:IntGray} either correspond to cusps with area $2c_1+4$, $c_1+5$, or $6$. 

Note that for all choices of $1 \leq c_j \leq 2$, for $j=1, \dots, m$, this decoration will satisfy conditions (A), (B), and (C) of Definition~\ref{def:PackingConditions} by construction, hence by Lemma~\ref{lemma:GeomTriang}, it can be realized by a complete hyperbolic structure on $F_m$ with embedded cusps. 
\end{proof}

Lemma~\ref{lemma:TriangulationConstruction} is stated in terms of areas of cusps, but recall that for a hyperbolic surface, cusp area is the same as length of boundary curve for a cusp.
We wish to ensure that the smallest of the lengths obtained in Lemma~\ref{lemma:TriangulationConstruction} is as large as possible. We will do so by focusing on the lengths $L_j$. Provided we can show that $c_j \leq c_{j+1}$ for all $j$, this will prove that $L_j'\geq L_j$. We will also check the lengths of $L_m$ and lengths along gray triangles.  



\begin{lemma}\label{lemma:cjChoice}
For the decoration of Lemma~\ref{lemma:TriangulationConstruction}, for any $\epsilon>0$, there exists $m$ and a choice of constants $\{c_1, \dots, c_m\}$ such that each $c_i$ satisfies $1\leq c_i \leq 2$, such that $c_i \leq c_{i+1}$ for $i=1, \dots, m-1$, and such that each length along a cusp is at least $10/\sqrt{3}-\epsilon$.
\end{lemma}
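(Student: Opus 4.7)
The plan is to choose the $c_j$ so that the ``edge'' lengths $L_j$ exactly realize the target $10/\sqrt{3}$, and then verify that all remaining cusp-length formulas automatically meet or exceed this target.

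Solving $L_j = 4/c_{j+1} + 2c_j = 10/\sqrt{3}$ for $c_{j+1}$ in terms of $c_j$ gives the recurrence
$$c_{j+1} = f(c_j), \qquad f(x) := \frac{2}{5/\sqrt{3} - x}.$$
The fixed points of $f$ are the roots of $x^2 - (5/\sqrt{3})x + 2 = 0$, namely $x = 2/\sqrt{3}$ and $x = \sqrt{3}$; since $f'(2/\sqrt{3}) = 2/3 < 1$ while $f'(\sqrt{3}) = 3/2 > 1$, the smaller root is attracting. A short calculation rewrites
$$f(x) - x = \frac{x^2 - (5/\sqrt{3})x + 2}{5/\sqrt{3} - x},$$
whose numerator is an upward parabola with roots $2/\sqrt{3}$ and $\sqrt{3}$, hence positive on $[1, 2/\sqrt{3})$, while the denominator is positive there. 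Thus $x < f(x) < 2/\sqrt{3}$ on $[1, 2/\sqrt{3})$, and $f$ is increasing on this interval. Setting $c_1 := 1$ and iterating produces a strictly increasing sequence $c_1 < c_2 < \cdots$ in $[1, 2/\sqrt{3})$ with $c_j \to 2/\sqrt{3}$.

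Given $\epsilon > 0$, choose $m$ large enough that $5c_m \geq 10/\sqrt{3} - \epsilon$. By construction $L_j = 10/\sqrt{3}$ for $1 \leq j \leq m-1$, and $L_m = 5c_m \geq 10/\sqrt{3} - \epsilon$. Since the $c_j$ are non-decreasing, the interior-vertex areas satisfy $L_j' = 4/c_{j+1} + c_j + c_{j+2} \geq 4/c_{j+1} + 2c_j = L_j = 10/\sqrt{3}$. For the gray-triangle areas, each of $2/c_1^2 + 4/c_1$, $c_2 + 2 + 3/c_1$, $c_2 + 1 + 4/c_1$, $2c_1 + 4$, $c_1 + 5$, and $6$ evaluates to at least $6 > 10/\sqrt{3}$ (using $c_1 = 1$ and $c_2 \geq 1$). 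Finally $1 \leq c_j < 2/\sqrt{3} \leq 2$ and $c_j \leq c_{j+1}$ by construction, so the hypotheses of Lemma~\ref{lemma:TriangulationConstruction} are satisfied and we obtain a complete hyperbolic structure with embedded cusps of the required length.

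The main obstacle is the fixed-point analysis of $f$: one must verify that the initial value $c_1 = 1$ lies strictly below the attracting fixed point $2/\sqrt{3}$ (rather than near the repelling fixed point $\sqrt{3}$), and that the iteration climbs monotonically up to $2/\sqrt{3}$ so that $c_m$ can be driven arbitrarily close to it. Once the dynamics of $f$ are understood, the remaining checks amount to routine arithmetic.
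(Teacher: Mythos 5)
Your proof is correct and takes essentially the same route as the paper: the same recursion $c_{j+1} = 4/(10/\sqrt{3} - 2c_j)$, the same identification of $2/\sqrt{3}$ as the attracting and $\sqrt{3}$ as the repelling fixed point, monotone convergence of the $c_j$ so that $L_m = 5c_m \to 10/\sqrt{3}$, and a direct check of the gray-triangle areas. The only cosmetic difference is your choice $c_1 = 1$ (the paper instead takes $c_1 = (\sqrt{3}+\sqrt{3+5\sqrt{3}})/5 \approx 1.029$, making the type-(4) area exactly $10/\sqrt{3}$, and argues monotonicity via the M\"obius transformation being a hyperbolic translation), and both choices satisfy all the required bounds.
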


\begin{proof} 
Set all of the $L_j$ to be equal to $L$. This gives a system of equations. Considering this system as a discrete dynamical system in the $c_j$, we obtain the recursive formula
\[ c_{j+1}=\frac{4}{L-2c_{j}}.\]
Notice that this defines $c_{j+1}$ in terms of a M\"obius transformation fixing the upper half plane $\HH^2$. Any initial choice of $c_1$ determines all following $c_j$. Moreover, the condition $5\,c_m = L_m=L$ enables us to compute values for the system. As $j$ approaches $\infty$, the numbers $c_j$ will approach an attracting fixed point. That is, they will approach $x$ such that
\[ x = \frac{4}{L-2x}. \]
Appealing to the condition that $L_m = 5x$, we obtain $x=4/(3x)$, indicating that $x=\pm 2/\sqrt{3}$. Using the positive root, we obtain $L=10/\sqrt{3}$. Furthermore, for this choice of $L$, $2/\sqrt{3}$ is the attracting fixed point, and $\sqrt{3}$ the repelling fixed point, of the dynamical system given by the $c_j$'s.
Thus the M\"obius transformation is a hyperbolic translation of the upper half plane, translating along the axis from $\sqrt{3}$ to $2/\sqrt{3}$, and so for $c_1<2/\sqrt{3}$, the sequence $\{c_1, c_2, \dots\}$ will increase monotonically to approach $2/\sqrt{3}$.

Therefore, for any pair $(c_1, \epsilon)$ such that $c_1 < 2/\sqrt{3}$ and 
$\epsilon >0$, there exists an $m$ such that $c_m > 2/\sqrt{3}-\epsilon/5$, 
$L_m > 10/\sqrt{3}-\epsilon$ and $c_j \leq c_{j+1}$.

Thus, given $\epsilon$, any such $c_1$ will ensure lengths of cusps around vertices of types~\eqref{itm:LargeWhiteCorner}, \eqref{itm:IntWhiteEdge}, and \eqref{itm:IntWhite} are at least $10/\sqrt{3}-\epsilon$. We need to ensure lengths of cusps around other vertices are also long. 
Solving $2/c_1^2 + 4/c_1 = L = 10/\sqrt{3}$ for $c_1$, we obtain
\[c_1 = \frac{\sqrt{3}+\sqrt{3+5\sqrt{3}}}{5}\approx 1.029353 < \frac{2}{\sqrt{3}}.\] 
Thus if we choose this value of $c_1$, the vertices of type~\eqref{itm:GrayCorner} will have area exactly $10/\sqrt{3}$. 
A vertex of type~\eqref{itm:IntGrayEdge} will have area either $c_2+2+3/c_1 \geq c_1+2+3/c_1 \approx 5.944 > 10/\sqrt{3}$, or $c_2+1+4/c_1 \geq c_1+1+4/c_1 \approx 5.915 > 10/\sqrt{3}$. 

Finally, vertices of type~\eqref{itm:IntGray} either correspond to cusps with area $2c_1 +4$, $c_1+5$ or $6$. Since $c_1>1$, these cusps are all at least area $6$.
\end{proof}

\begin{proof}[Proof of Proposition~\ref{prop:planar_realization}]
Given $\epsilon>0$, select $m$ and $\{c_1, \dots, c_m\}$ such that the triangulation and decoration of Lemma~\ref{lemma:TriangulationConstruction} satisfy the conclusions of Lemma~\ref{lemma:cjChoice}. 
\end{proof}
 
The above packing has the seemingly undesirable property that the average area of a cusp in the packing is not equal to the minimum area of a cusp in the packing.
Thus, we might hope to shrink the area of the cusps with area above $10/\sqrt{3}$ and use that room to increase the smallest cusp areas in the packing. However, we were unable to find such a packing that could increase the areas around the vertices of type~\eqref{itm:LargeWhiteCorner} to a value above $10/\sqrt{3}$. 

\subsection{Proof of Theorem~\ref{thm:main}.}
We now prove the main theorem. 

\begin{proof}[Proof of Theorem \ref{thm:main}]
Proposition~\ref{prop:planar_realization} shows for all $\epsilon' > 0$ there exists a horoball packing $H$ of a planar surface $F$ such that all cusps have area at least $10/\sqrt{3}-\epsilon'$.
For any $\epsilon''>0$, Theorem~\ref{Thm:ReducibleFilling} provides a cusped hyperbolic 3--manifold $M$, a horoball packing $H_M$ of $M$, and a topological embedding of $F$ into $M$ such that boundary slopes of the image of $F$ have length at least $10/\sqrt{3}-\epsilon'-\epsilon''$ on $\partial H_M$.
To complete the proof, choose $\epsilon'$ and $\epsilon''$ such that $\epsilon > \epsilon'+\epsilon''$. By Theorem~\ref{Thm:ReducibleFilling}, the Dehn filling along those boundary slopes is a reducible manifold. 
\end{proof}


\section{Graph manifolds and Seifert fibered spaces}

In this section, we will consider lengths of slopes giving exceptional fillings besides reducible. 
Refining our discussion from the introduction, an exceptional filling $s$ of a one--cusped hyperbolic manifold $M$ is either
reducible, finite (i.e. $|\pi_1(M(s))| < \infty$), small Seifert fibered with infinite fundamental group, or toroidal. We treat each case separately.

\subsection{Toroidal fillings and fillings along torus slopes}

We first consider lengths of slopes of manifolds that admit a toroidal filling, or more generally, lengths of slopes that are parallel to the boundary components of an essential punctured torus. As noted in the introduction, in the toroidal case, the bound of six is realized. Agol has examples in \cite{agol:bounds}, as does Adams \emph{et al} for knots in $S^3$ \cite{adams:TotallyGeodesic}. Both of these examples involve a specific packing of an embedded totally geodesic once--punctured torus. In each case, these packings are as dense as possible. In fact, this behavior is required for a toroidal filling realizing the bound of six, due to the following lemma, which follows quickly from the proof of \cite[Theorem~5.1]{agol:bounds}. 

\begin{lemma}\label{lemma:six-realized}
Let $M$ be a cusped hyperbolic 3--manifold and let $S$ be an essential punctured torus properly immersed in $M$. Let $s_1, \dots, s_n$ be slopes on the cusps of $M$ homotopic to the boundary slopes of $S$. Then there exists a cusp neighborhood $C$ for $M$ such that each $s_j$ has length exactly $6$ on $\bdy C$ if and only if the following hold:
\begin{enumerate}
\item $S$ is totally geodesic,
\item $H=C \cap S$ is a maximal cusp of $S$, and
\item the canonical decomposition of $S$ corresponding to $H$ consists of ideal triangles, each with a $[1,1,1]$ packing given by its intersection with $H$.
\end{enumerate}
\end{lemma}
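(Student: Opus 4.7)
My plan is to derive the lemma as the equality case of the chain of inequalities underlying Agol's proof of the $6$--Theorem. First I would replace $S$ by a pleated surface representative in its homotopy class (possible since $S$ is essential), which preserves the boundary slopes $s_j$. For any embedded cusp neighborhood $C$ of $M$, set $H = S \cap C$; this is an embedded horoball neighborhood of the cusps of $S$. The key chain of estimates is
\[
\sum_{j=1}^n |s_j|_{\partial C} \;\le\; \sum_{j=1}^n |\partial_j H|_{S} \;=\; \mathrm{area}(H) \;\le\; \frac{3}{\pi}\,\mathrm{area}(S) \;=\; 6n,
\]
where the first inequality holds because $s_j$ is the geodesic representative of the curve $\partial_j H$ on the flat torus $\partial C$; the middle equality is the standard identity that, for a horocyclic cusp neighborhood of a hyperbolic surface, the enclosed area equals the total boundary horocycle length; the next inequality is the sharp B\"or\"oczky density bound for horoball packings of $\HH^2$; and the final equality is Gauss--Bonnet for an $n$-punctured torus (area $= -2\pi\chi = 2\pi n$).

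For the ``only if'' direction, suppose each $|s_j|=6$. Then $\sum_j |s_j| = 6n$, so every inequality in the chain is an equality. Equality in the density bound is attained precisely when the canonical (Epstein--Penner) decomposition of $S$ with respect to $H$ consists of ideal triangles, each carrying the extremal $[1,1,1]$ packing, giving condition (3); since a $[1,1,1]$-packed triangle has its three horoballs mutually tangent, $H$ is maximal, giving (2). Equality in $|s_j| \le |\partial_j H|_{S}$ for each $j$ forces $\partial_j H$ to be a geodesic on $\partial C$; since on a pleated surface $\partial_j H$ is piecewise horocyclic with corners precisely where the pleating locus meets $\partial H$, the pleating locus must be disjoint from every cusp region. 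I would then argue, using that the $[1,1,1]$-packed canonical triangulation partitions $S$ into ideal triangles whose only limits are at cusps, that any nonempty geodesic lamination on such a surface must enter some cusp; hence the pleating locus is empty and $S$ is totally geodesic, giving (1).

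For the ``if'' direction, given (1)--(3), take $C$ to be the cusp neighborhood of $M$ that restricts to $H$ on $S$; this exists because $S$ is totally geodesic, so in $\HH^3$ each lift of $S$ is a plane meeting each horoball of $M$ in a horoball of that plane. Total geodesicity makes each $\partial_j H$ a horocycle on $\partial C$, hence a geodesic, so $|s_j| = |\partial_j H|_{S}$. The $[1,1,1]$ packing makes the area of $H$ at each cusp of $S$ equal to its degree in the canonical triangulation; combined with maximality and the horoball-size consistency across edges of a $[1,1,1]$ packing, this forces every cusp to have degree exactly $6$, so each $|s_j| = 6$.

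I expect the main difficulty to be the rigidity step in the ``only if'' direction: showing that equality in the slope-length estimate for every $j$ forces the pleating locus to be genuinely empty, rather than merely disjoint from the cusps. This needs to be handled by exploiting the $[1,1,1]$ structure of the canonical cell decomposition together with the fact that any nontrivial geodesic lamination in an ideally triangulated surface must cross the interior of some ideal triangle and hence enter a cusp. A secondary subtlety is the combinatorial check in the converse direction that a canonical $[1,1,1]$-packed triangulation of an $n$-punctured torus has every cusp of degree $6$; this should follow from maximality of $H$ together with the requirement that the local $[1,1,1]$ labels glue consistently across each edge of the triangulation.
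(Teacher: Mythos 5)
Your overall route is the same as the paper's: run through the chain of inequalities underlying Agol's proof of the $6$--Theorem and analyze the equality case. The B\"or\"oczky step, the area-equals-boundary-length identity, the Gauss--Bonnet computation, and your treatment of conditions (2) and (3) all match the paper. The genuine problem is the last step of your ``only if'' direction, where you deduce that $S$ is totally geodesic. You correctly observe that equality in $|s_j|\le |\partial_j H|_S$ forces the image of each horocycle to be a Euclidean geodesic on $\partial C$, hence the pleating locus to miss the cusp regions; but you then claim that ``any nonempty geodesic lamination on such a surface must enter some cusp.'' That claim is false: a simple closed geodesic (or, more generally, any compactly supported lamination, e.g.\ the closure of a simple geodesic of irrational slope on a once-punctured torus) is a nonempty geodesic lamination disjoint from every cusp neighborhood. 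A pleated surface bent only along such a lamination still maps each horocycle $\partial_j H$ isometrically onto a Euclidean geodesic of $\partial C$, so your corner argument produces no contradiction and condition (1) does not follow from your reasoning.

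The standard repair is to choose the pleating at the outset to be along an ideal triangulation of the punctured torus, so that every leaf of the pleating locus is an edge running into a cusp and therefore crossing some horocycle $\partial_j H$; any nonzero bending along such an edge puts a corner on the image of that horocycle in the flat torus $\partial C$, making it strictly longer than the geodesic representative $s_j$ and contradicting equality. With that modification your argument closes, and is in fact more detailed than the paper's, which simply asserts that equality in the length comparison holds if and only if there is no bending. Two smaller points: (i) $S\cap C$ need not a priori be a horoball neighborhood of the cusps of $S$ in the pleated metric; the paper first shrinks $C$ to $C'$ to avoid compact components of intersection and then argues at the end that $H'=C'\cap S$ must equal $H$, a step you should include. (ii) In the ``if'' direction you assert that the $[1,1,1]$ canonical triangulation forces every cusp of $S$ to have degree exactly $6$; Gauss--Bonnet only gives average degree $6$ over the $n$ cusps, so either this needs a separate argument or one should restrict attention to the case where it holds (it is automatic for a once-punctured torus).
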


\begin{proof}
In \cite[Theorem~5.1]{agol:bounds}, Agol proves that the sum of the lengths of the slopes $s_j$ is at most $6|\chi(S)|$. We walk through the proof of that theorem to determine what must occur when we have equality. First, pleat $S$ in $M$, and consider $S\cap C$, shrinking $C$ if necessary to $C'$ to avoid compact regions of intersection. The lengths of the slopes $s_i$ on $\bdy C'$ in $M$ are at most the lengths of the curves running along the cusp $C'\cap S$ of $S$ in the hyperbolic structure it inherits from the pleating. We have equality in this case if and only if there is no bending along the pleating locus, i.e.\ the surface $S$ is totally geodesic.

Next, the length of a curve along $H'=C'\cap S$ is known to be the area of $H'$. By work of B\"or\"oczky \cite{boroczky}, this area will be at most $3/\pi$ times the area of $S$. We will have equality if and only if the cusp $H'$ is as dense as possible in $S$, which will happen if and only if the canonical decomposition of $S$ with respect to $H'$ consists of triangles, each with packing $[1,1,1]$. Since $C'$ cannot be expanded without $H'=C'\cap S$ becoming no longer embedded, $H'$ must equal $H$, and the lemma follows. 
\end{proof}

\begin{remark}
Suppose $S$ has a horoball packing $H$ that is as dense as possible as in B\"or\"oczky~\cite{boroczky}, and suppose we have an ideal triangulation of $S$. It does not follow that each triangle has a $[1,1,1]$ packing. For example, a triangle might have a $[2, 1/2, 1/2]$ packing. However, these triangles are not canonical. The canonical packing is shown in Figure~\ref{fig:localHoroballPacking}. 
\end{remark}

\begin{figure}
\includegraphics[height=3.8cm]{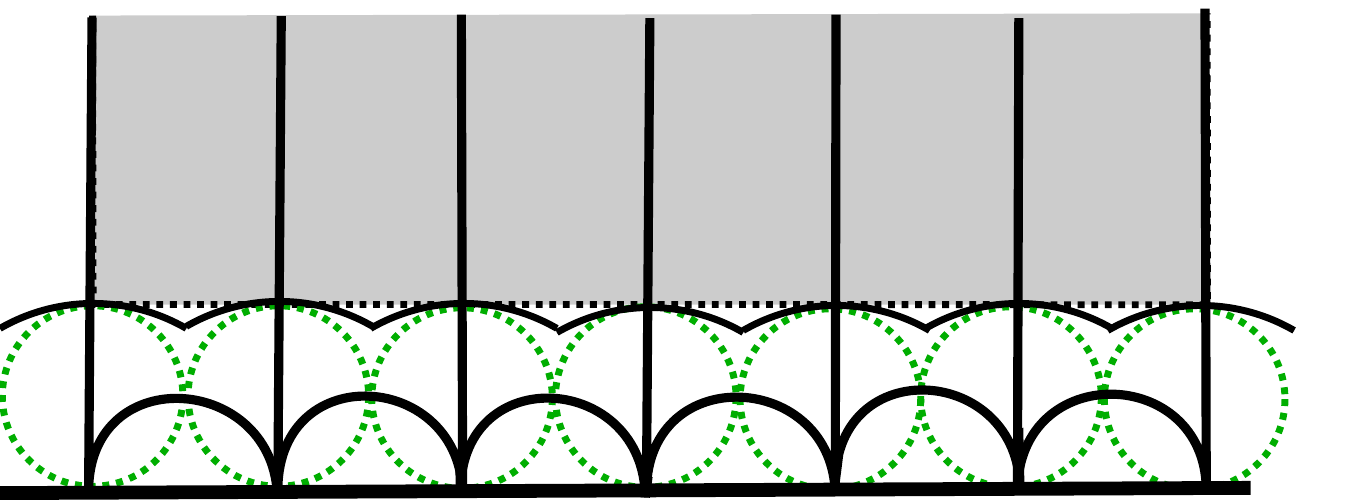}
\caption{The local picture of a horoball packing of $F$. Each horoball tangent to the horoball at $\infty$ is diameter one. The centers of the curvilinear triangles in the complement of the horoball packing are at height $\sqrt{3}/{2}$.}
\label{fig:localHoroballPacking} 
\end{figure}

This packing will be a key tool used to address the slope lengths of Dehn fillings that produce Seifert fibered manifolds with infinite fundamental group. 

\subsection{The Seifert fibered case}

Next we consider examples of manifolds with long slopes yielding small Seifert fibered fillings with infinite fundamental group. The following proposition gives a lower bound on the longest such slope length.

\begin{prop}\label{prop:SFlength5}
There exists a sequence of hyperbolic 3--manifolds $M_n$, each with one cusp, and slopes $s_n$ with length approaching $5$ such that $M_n(s_n)$ is a small Seifert fibered manifold with infinite fundamental group.
\end{prop}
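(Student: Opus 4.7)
The plan is to exhibit an explicit sequence $\{(M_n, s_n)\}$. Unlike the constructions earlier in the paper, where Theorem~\ref{Thm:ReducibleFilling} lets us turn any hyperbolic planar surface into a reducible filling, the SSF case admits no general embedding principle we can leverage directly, so we fall back on controlled examples.

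\textbf{Step 1: The seed manifold.} I would fix a two-cusped hyperbolic 3--manifold $L$ that admits a small Seifert fibered filling (with infinite $\pi_1$) when both cusps are filled along appropriate slopes. Natural candidates are two-bridge link complements or other small-volume 2-cusped manifolds from the standard census whose exceptional fillings are already tabulated (in particular, ones known to produce SSF structures upon double filling). The key geometric requirement is that, on one cusp of $L$, there is a slope of length exactly $5$ (or $\ge 5 - \delta_0$) whose filling, combined with an appropriate slope on the other cusp, yields an SSF with infinite fundamental group.

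\textbf{Step 2: Pass to a family.} From $L$, construct $M_n$ by performing a Dehn filling along the cusp that will not receive $s_n$, choosing slopes $\alpha_n$ tending to $\infty$ in the sense of hyperbolic Dehn surgery. By Thurston's Hyperbolic Dehn Surgery Theorem, for all sufficiently large $n$ the manifold $M_n = L(\alpha_n)$ is hyperbolic with a single cusp, and the geometry of the remaining cusp converges to that of the corresponding cusp of $L$. The slope $s_n$ on $M_n$ inherited from the target SSF slope on $L$ then has length $\ell_{M_n}(s_n) \to \ell_L(s)$, which we arrange to be $5$.

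\textbf{Step 3: The filling is SSF with infinite $\pi_1$.} For all $n$ sufficiently large, the filled manifold $M_n(s_n) = L(\alpha_n, s)$ inherits the Seifert structure of $L(\alpha, s)$: if $L(\alpha, s)$ is a small Seifert fibered space over $S^2(p, q, r)$ with $1/p + 1/q + 1/r \le 1$, then $L(\alpha_n, s)$ is SSF over $S^2(p, q, r_n)$ for a sequence of exceptional orders $r_n$, and the condition on the orbifold Euler characteristic (hence on $\pi_1$ being infinite) persists. This uses that $\alpha_n$ can be chosen so that the Seifert fibration on the triple filling is preserved, which amounts to verifying that the slope $\alpha_n$ is transverse (in the appropriate sense) to the fibers of the Seifert structure on $L(s)$.

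\textbf{Main obstacle.} The principal difficulty is identifying a concrete $L$ for which the numerical value of the limiting slope length is exactly $5$ (not $10/\sqrt{3}$, not $6$, not something smaller), while simultaneously ensuring that the Seifert structure persists through the filling family. This is essentially an explicit computation in the hyperbolic geometry of $L$ --- most naturally done by choosing $L$ from a list of two-cusped manifolds whose cusp shapes and exceptional surgery descriptions are already known (e.g.\ via SnapPy and the census of Seifert fibered fillings of low-volume link complements) --- and then verifying that the slope in question has the required length on the maximal horocusp of $L$.
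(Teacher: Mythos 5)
Your outline matches the paper's strategy exactly: start from a two-cusped hyperbolic manifold with a length-$5$ slope on one cusp whose double filling is a small Seifert fibered space, fill the other cusp along slopes going to infinity, and invoke Thurston's Hyperbolic Dehn Surgery Theorem so that the resulting one-cusped manifolds $M_n$ are hyperbolic and the length of $s_n$ converges to $5$. Steps 2 and 3 of your plan are sound in principle (with the caveat noted below).

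The genuine gap is that you never produce the manifold $L$, and for an existence statement of this kind the concrete example \emph{is} the proof. You correctly identify this as ``the principal difficulty'' and then defer it, so what remains is a conditional argument. The paper resolves it by taking $L$ to be the Whitehead Sister manifold $m125$: one computes the cusp translations explicitly ($(1,0)\mapsto 2-i$ and $(0,1)\mapsto 1+2i$ on the maximal cusp where cusp~$0$ touches itself), so the $(2,1)$ slope is realized by translation by $4+3i$ and has length exactly $5$; and one verifies (via Regina, or the Martelli--Petronio tables for the magic manifold and its sisters) that $m125(2,1)(r,1)\cong S^2(2,1)(3,1)(2r+1,-2r+1)$, a small Seifert fibered space with infinite $\pi_1$ for $r\notin\{-3,\dots,2\}$. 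A second, smaller issue is in your Step 3: you argue that the Seifert structure of a fixed triple filling ``persists'' as $\alpha_n$ varies by a transversality argument on fibers, but the cleaner (and the paper's) logic is the reverse --- one first identifies the entire one-parameter family of double fillings $m125(2,1)(r,1)$ as small Seifert fibered spaces by direct computation, and only then observes that the intermediate manifolds $M_r=m125(\,)(r,1)$ are hyperbolic for large $r$ with the $(2,1)$ slope length tending to $5$. Without the explicit identification of the filled manifolds, your transversality heuristic is not a verification that the fillings are small SFS with infinite fundamental group.
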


\begin{proof}
Consider the Whitehead Sister manifold, which is $m125$ in the SnapPy census~\cite{snappy}. Our numbering and framing of cusps of this manifold will be consistent with SnapPy.

Choose a horoball packing of $m125$ such that cusp~0 touches itself. We first find a slope on cusp~0 of length $5$ that yields a manifold with interesting properties. 

Consider a fundamental domain for $m125$ such that the point of tangency of cusp~0 occurs at height~1. Then there exists a representation $\rho\co \pi_1(m125) \to \PSL(2,\CC)$ such that cusp~0 lifts to $\infty$ in the universal cover, and so an element of the peripheral subgroup at $\infty$ can be realized as a translation in the plane at height~1. If we identify this plane with the complex plane, we can further assume (after conjugating by a rotation fixing $\infty$ if necessary) that the slope $(1,0)$ corresponds to translation by $2-i$ and the $(0,1)$ slope can be realized as a translation by $1+2i$.
Under this representation, the $(2,1)$ slope on cusp~0 is realized by a translation by $4+3i$. Hence it has length $5$. Using Regina~\cite{burton:regina}, one can identify explicitly the manifold obtained by Dehn filling this slope; it is the one-cusped graph manifold
\[
M'\cong D^2(2,1)(3,1) \cup_{\tiny{ \begin{pmatrix} 1 & 1\\ 1 & 0 \end{pmatrix} } } A(2,1).
\]
(This result can also be verified by~\cite{martellipetronio:magic}, where the notation is explained, but it will not be important in our argument. Note also that the framing of SnapPy has changed since that paper was published; adjust their framings on the Whitehead Sister manifold in~\cite[Table~A.1]{martellipetronio:magic} by $+2$ to obtain the current framing.)

Now consider cusp~1 on $m125$. Another feature of this manifold is that there is a 1--parameter family of fillings $(r,1)$  of cusp~1 such that
\[
m125()(r,1) \cong S^2(2,1)(3,1)(2r+1,-2r+1)) -K_r,
\]
for some knot $K_r$ with meridian corresponding to the $(2,1)$ slope of cusp~0 on $m125$, hence
\[
m125(2,1)(r,1) \cong S^2(2,1)(3,1)(2r+1,-2r+1)).
\]
This is a small Seifert fibered space. Its fundamental group will be infinite provided $r$ is not one of $-3,-2,-1,0,1,2$.
Thus, for sufficiently large $r$, the cusped manifolds 
\[
M_r = m125()(r,1) \cong S^2(2,1)(3,1)(2r+1,-2r+1)) -K_r
\]
approach m125, and so are hyperbolic by Thurston's Hyperbolic Dehn Surgery Theorem~\cite[5.8.2]{thurston}. The slope on $M_r$ corresponding to the slope $(2,1)$ on cusp~0 of $m125$ has length approaching its maximal length in $m125$, which, as mentioned above, is $5$.
\end{proof}

Proposition~\ref{prop:SFlength5} gives a lower bound of five on the length of the longest slope yielding a small Seifert fibered space with infinite fundamental group. We do not claim this is a sharp lower bound. However, we can prove that no slope corresponding to a small Seifert fibered space has length exactly six.  

\begin{theorem}\label{thm:SSFbound}
Let $M$ be a hyperbolic manifold with one cusp, and let $s$ be a slope such that $M(s)$ is a small Seifert fibered space with infinite fundamental group.  Then the length of $s$ is strictly less than six.  
\end{theorem}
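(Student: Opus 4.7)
The plan is to argue by contradiction via Lemma~\ref{lemma:six-realized}. By the $6$--Theorem of Agol and Lackenby, a length strictly greater than six would force $M(s)$ to be hyperbolic, contradicting the hypothesis. Hence it suffices to rule out length exactly six; assume this for contradiction.

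The first step is to produce an essential properly immersed punctured torus $S$ in $M$ with every boundary slope parallel to $s$. Because $M(s)$ is a small Seifert fibered space with infinite fundamental group, $\pi_1(M(s))$ fits into a central extension
\[
1 \to \langle t \rangle \to \pi_1(M(s)) \to \Delta \to 1,
\]
where $t$ is the regular fiber and $\Delta$ is an infinite (possibly Euclidean) triangle group, and so contains an element of infinite order. Combining such an element with $t$ yields a $\mathbb{Z}^2$ subgroup of $\pi_1(M(s))$. Since small Seifert fibered spaces with infinite fundamental group are aspherical, this subgroup is realized by a $\pi_1$--injective immersion $f\colon T^2 \to M(s)$. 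After making $f$ transverse to the core $c$ of the filled solid torus and deleting small open disks around the finitely many points of $f^{-1}(c)$, one obtains a properly immersed punctured torus $S \subset M$, each of whose boundary components is a meridian of $c$ and hence parallel to $s$. A standard least--weight argument allows $S$ to be taken essential in $M$.

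Lemma~\ref{lemma:six-realized}, applied to $S$ with $|s_j| = 6$ for every boundary component, then forces $S$ to be totally geodesic and its canonical decomposition with respect to the maximal cusp to consist entirely of ideal triangles with symmetric $[1,1,1]$ packings. Capping off $S$ under the Dehn filling recovers the torus $f(T^2)$ in $M(s)$, which by standard structure results for Seifert fibered spaces is homotopic either to a vertical torus (the preimage of a simple closed curve in the base) or, only in the Euclidean base case $1/p+1/q+1/r=1$, to a horizontal torus covering the base orbifold. The final step is to show that the rigid data on $S$ coming from the $[1,1,1]$ packing — the precise Euclidean shape of the cusp cross section and the position of the meridian slope $s$ within it — is incompatible with both possibilities for every admissible Seifert triple $(p,q,r)$ with $1/p+1/q+1/r \le 1$. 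Turning this geometric rigidity into a clean numerical obstruction is the main difficulty; I expect it to proceed by comparing the length of the regular fiber in $f(T^2)$ with the length of the horocycle determined by the $[1,1,1]$ packing, and by using B\"or\"oczky's density bound (which already underpins Lemma~\ref{lemma:six-realized}) to preclude any matching configuration.
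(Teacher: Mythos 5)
Your setup is sound and matches the paper up to the application of Lemma~\ref{lemma:six-realized}: the $6$--Theorem reduces the problem to ruling out length exactly six, an immersed essential punctured torus $S$ with boundary slope $s$ exists (the paper gets the immersed essential torus in $M(s)$ from Scott, rather than from your central-extension argument, but both are fine), and the lemma then forces $S$ to be totally geodesic with the maximal $[1,1,1]$ horoball packing on each triangle of the canonical decomposition. The gap is the final step, which you leave as an expectation rather than an argument, and the strategy you sketch for it is not the right one. The Seifert invariants $(p,q,r)$ and the length of the regular fiber play no role in the obstruction: the filled manifold's internal Seifert geometry imposes no direct metric constraint on the hyperbolic structure of $M$, so a case analysis over admissible triples and a comparison of fiber length with horocycle length has no obvious foothold.

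The idea you are missing is to exploit the fact that $S$ is immersed but \emph{not} embedded (it cannot be embedded, since capping it off would produce an embedded essential torus in a small Seifert fibered space, and $M$ itself is atoroidal). Because $S$ is totally geodesic, non-embeddedness forces two distinct lifts $\bar{T}$ and $\tilde{T}$ in $\HH^3$ to intersect in a geodesic $\gamma$. This $\gamma$ cannot be a vertical geodesic ending at the center of a horoball (that would make the two lifts determine distinct slopes on the single cusp), so its endpoints avoid the horoball centers on $\partial\bar{T}$, and one checks that $\gamma$ must then pass through the interior of some horoball of the packing in a compact arc. Hence some covering translate $H_1$ of the maximal cusp meets $\bar{T}$ in a compact disk. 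Normalizing so that the nearest horoball on $\bar{T}$ is the one at infinity at height one, the center of that disk sits in a curvilinear triangle of the $[1,1,1]$ packing and therefore has height at least $\sqrt{3}/2$, forcing $H_1$ to have diameter at least $\sqrt{3}>1$ and to overlap the horoball at infinity. This contradicts embeddedness of the maximal cusp and finishes the proof; it is this density-versus-transversality contradiction, not a numerical constraint from the Seifert data, that closes the argument.
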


\begin{proof}
Suppose $M(s)$ is a small Seifert fibered space with infinite fundamental group, but the length of $s$ is at least six.  The 6--Theorem implies the length is exactly six.

The base orbifold of $M(s)$ is a sphere with three cone points.  Work of Scott~\cite{scott1983noFake} implies that there is an immersed essential torus in $M(s)$.  Because $M$ is hyperbolic, there must be an immersed essential punctured torus $T$ in $M$, punctured $n$ times.  Thus $T$ meets $\partial M$ $n$ times, each with slope $s$.

We may use the hyperbolic structure of $M$ to give a pleating on $T$ in $M$. Since the length of each slope is exactly six, the immersed surface must be totally geodesic, and the horoball packing on each cusp must be the densest packing possible, by Lemma~\ref{lemma:six-realized}.
Thus in the universal cover $\HH^3$, $T$ lifts to a geodesic plane $\bar{T}$ and the maximal cusp for $M$ lifts to the densest possible horoball packing on this plane. That is, horoballs meet in triples with triangular regions in the complement.

Now, because $T$ is merely immersed and not embedded, there must be another lift $\tilde{T}$ of $T$ to $\HH^3$ that intersects $\bar{T}$.  Note $\tilde{T}$ must also be totally geodesic, hence $\bar{T} \cap \tilde{T}$ is a geodesic in $\HH^3$ on the vertical plane $\bar{T}$.  First, note that $\bar{T} \cap \tilde{T}$ cannot be a vertical geodesic, for then $\bar{T}$ and $\tilde{T}$ would be nonparallel planes, defining distinct slopes on $\partial M$.  So $\bar{T} \cap \tilde{T}$ is a geodesic $\gamma \in \HH^3$ with endpoints disjoint from the centers of horoballs in the packing on $T$.

We claim $\gamma$ must intersect some horoball $H_0$ in a compact interval with nonempty interior.  This is because, first, the endpoints of $\gamma$ are disjoint from horoballs. Next, the complement of all horoballs in $T$ is a collection of curvilinear triangles.  If $\gamma$ meets no such $H_0$ then it meets a triangle.  It cannot meet it in its edges, which are boundaries of horoballs, since then $\gamma$ would continue through the interior of the corresponding horoball.  But if it meets the triangle only in vertices, then it must run along the geodesic between those vertices, which runs through the interior of a horoball.  

So $\gamma$ must intersect some horoball $H_0$ in a compact interval with nonempty interior.  It follows that $\tilde{T}$ must intersect $H_0$ in a compact disk.  Since $\tilde{T}$ differs from $\bar{T}$ by a covering isometry, there must be some horoball lift of the embedded maximal cusp $H_1\subset \HH^3$ such that $H_1 \cap \bar{T}$ is a compact disk. We will show this leads to a contradiction.

Since the maximal cusp of $M$ is embedded, all lifts of the cusp to $\HH^3$ are disjoint.  Thus if $H_1 \cap \bar{T}$ is a compact disk, it must lie in one of the triangular regions in the complement of the horoballs with centers on $\partial \bar{T}$.  In particular, the disk must have a center closest to some horoball $H_2$ on $\bar{T}$.  Applying a hyperbolic isometry, we may assume that $H_2$ is the horoball at infinity, of height one, and the disk of intersection $H_1 \cap \bar{T}$ lies in one of the small triangles between the horoball at infinity and two tangent horoballs of diameter one.  Denote the Euclidean center of the disk $H_1 \cap \bar{T} \cong \HH^2$ by $q$.  The height of $q$ is also the Euclidean radius of $H_1$.  But $q$ lies in a triangular region above horoballs of diameter one, hence it has height at least $\sqrt{3}/2$.  But then the diameter of $H_1$ is at least $\sqrt{3} > 1$, contradicting the fact that $H_1$ is disjoint from the horoball at infinity. This contradiction concludes the proof that the length of the slope $s$ is strictly less than six.
\end{proof}

\begin{remark}
The above proof applies to any space with a Dehn filling yielding a manifold with an immersed essential torus. For example, this will apply to many graph manifolds as well. The longest graph manifold slope for one--cusped manifolds in the SnapPy census has length $2\sqrt{5\sqrt{2}} \approx 5.318$, and occurs for the manifold {\tt{m130}}. It would be interesting to find a sharp upper bound for slope length for graph manifold fillings. 

To put this into a better context, the example in \cite{agol:bounds} has eight tetrahedra and the first example of \cite{adams:TotallyGeodesic} has nine tetrahedra, and so both manifolds appear in the census with nine or fewer tetrahedra. While we expect to see some very interesting behavior regarding exceptional surgeries in the cusped census, not all long slopes appear this way. For example, the reducible fillings of the main theorem have too many cusps to appear in any existing census.
\end{remark}

\subsection{Finite fillings}

We say only a few words on what is known in the case of finite fillings, and conclude this section with a summary. 

There are many examples of one--cusped hyperbolic manifolds with slopes that yield $S^3$ fillings with slope lengths approaching $4$, for example \cite[Section~7]{agol:bounds} and \cite[Section~4]{purcell:slope}. Note the same basic construction of used in the proof of \cite[Corollary~4.4]{purcell:slope} can also produce hyperbolic knots in lens spaces with meridians of length approaching 4. The final step of that argument is to fill an unknotted component of a link by a slope of $1/c_2$. However, if we allow $p/c_2$ fillings with $p>1$ and $p$ and $c_2$ relatively prime, then the knot complements constructed still have meridian lengths approaching 4, but will be knot complements in (non-trivial) lens spaces as opposed to in $S^3$. A similar argument can also be applied to the knot complements constructed in \cite[Section~7]{agol:bounds}.

If we allow multiple cusps, work of Goerner shows that $4$ is not an upper bound on slope length. In \cite[Table~1]{goerner2014regular}, Goerner exhibits an arithmetic construction of a link in $S^3$ with 64 cusps such that each meridian is at least length $\sqrt{21}\approx 4.5826$.

Notice that this difference between known lengths of slopes for one-cusped versus multi-cusped manifolds also appears in the reducible case. The examples of Theorem~\ref{thm:main} prove that the maximal length reducible slope in the multi-cusped case is at least $10/\sqrt{3}$. However, when we restrict to one-cusped manifolds, the longest length examples we could find were reducible fillings obtained from a sequence of fillings of the minimally--twisted 5-chain link, with slope lengths approaching $4$. Details on fillings of the minimally--twisted 5-chain link can be found in~\cite{martellipetronioroukema}.

\begin{table}[t]
\begin{tabular}{c c c }
Type & one-cusp & multi-cusp \\
\midrule
Finite & 4* & $\sqrt{21}$\\
Reducible & 4* & $10/\sqrt{3}*$\\
small SFS  & 5* & 5 \\
Toroidal & 6 & 6 \\
\end{tabular} 
\caption{\label{tab:LongSlopes}  A table of the lengths of observed slopes that realize each type of exceptional filling. Slope lengths marked with an asterisk are those for which the bound is realized asymptotically. 
  The lengths of the toroidal fillings are the only ones known to be sharp.  } 
\end{table}

Table~\ref{tab:LongSlopes}  provides a summary of these examples and the other examples given in this paper. In the table, ``small SFS'' refers to small Seifert fibered space fillings with infinite fundamental group. An asterisk indicates that the bound is realized asymptotically.

We conclude by asking the following questions motivated by the table.

\begin{question}
If $M$ is a one-cusped hyperbolic manifold and $s$ is a reducible, finite, or small Seifert fibered slope, what is the maximal length of $s$? 
\end{question}

\begin{question}
If $M$ is a multi-cusped hyperbolic manifold and $s$ is a reducible, finite, or small Seifert fibered slope, what is the maximal length of $s$?
\end{question}

\bibliographystyle{amsplain}
\bibliography{references}

\end{document}